\newtheorem{theorem}{Theorem} [section]
\newtheorem{thm}[theorem]{Theorem}
\newtheorem{cor}[theorem]{Corollary}
\newtheorem{lemma}[theorem]{Lemma}
\newtheorem{lma}[theorem]{Lemma}
\newtheorem{prop}[theorem]{Proposition}
\begin{document}

\title{Intrinsically Knotted and 4-Linked Directed Graphs}

\author{Thomas Fleming}
\author{Joel Foisy}

\begin{abstract}
We consider intrinsic linking and knotting in the context of directed graphs.  We construct an example of a directed graph that contains a consistently oriented knotted cycle in every embedding.   We also construct examples of intrinsically 3-linked and 4-linked directed graphs.  We introduce two operations, consistent edge contraction and H-cyclic subcontraction, as special cases of minors for digraphs, and show that the property of having a linkless embedding is closed under these operations.  We analyze the relationship between the number of distinct knots and links in an undirected graph $G$ and its corresponding symmetric digraph $\overline{DG}$.  Finally, we note that the maximum number of edges for a graph that is not intrinsically linked is $O(n)$ in the undirected case, but $O(n^2)$ for directed graphs.
\end{abstract}

\maketitle

\section{Introduction}

An abstract graph G is called \emph{intrinsically linked} (\emph{intrinsically
knotted}) if every embedding of that graph into three-space contains
a nontrivial link (knot). Intrinsically linked and intrinsically
knotted graphs were first studied in the early 1980s by Sachs
\cite{sachs} and by Conway and Gordon \cite{cg}.  In this paper, we will consider directed graphs and use the terms digraph and directed graph interchangeably.  Undirected graphs will be denoted $G$, and directed graphs as $\overline{G}$.  For a graph $G$, the digraph $\overline{DG}$ is the symmetric digraph corresponding to $G$ formed by replacing each edge $v_1 v_2$ of $G$ with the directed edges $v_1v_2$ and $v_2v_1$.

In \cite{FHR}, Foisy, Howards and Rich consider intrinsic linking in the context of directed graphs, defining a directed graph $\overline{G}$ to be intrinsically linked (knotted) if every embedding of the digraph $\overline{G}$ contains a nontrivial link (knot) where every cycle in the link (knot) has a consistent orientation. This definition is motivated by applications where the graph represents a flow, such as an electrical current, and the edges of the graph must have a consistent orientation for the flow to occur.  Foisy, Howards and Rich then prove that the complete symmetric digraph on six vertices $\overline{DK_6}$ is intrinsically linked. We continue their work by finding an example of an intrinsically knotted directed graph in Section \ref{i_knot_section}.

Variations of intrinsic linking have been studied that require more complex structures in every embedding of the graph, such as a nonsplit link of $n$ components \cite{flapan2}, a link where one or more components are non-trival knots \cite{flem}, or arbitrary linking patterns between the components \cite{flappan}.  We begin extending the theory of intrinsic linking for directed graphs in this direction by demonstrating examples of intrinsically 3- and 4-linked directed graphs in Sections \ref{3_link_section} and \ref{4_link_section}.

Researchers have also studied the minimal number of links (knots) that are contained in the embeddings of an intrinsically linked (knotted) graph \cite{fmellor}, \cite{mellor2}, see also \cite{survey} for a summary of similar results when restricting to straight line embeddings of such graphs.  Inspired by a question of \cite{FHR}, we investigate the number of distinct knots or links in directed graphs of the form $\overline{DG}$.  We construct bounds on the minimum number of knots or links in $\overline{DG}$ based on the minimum number of knots or links in $G$, and show that if $\overline{DG}$ is intrinsically linked, then it must contain at least 4 distinct, consistently oriented links.   This is in contrast to arbitrary directed graphs, and we provide an example of an intrinsically linked directed graph that admits an embedding with a single non-trivial consistently oriented link.

For undirected graphs, the property of having a linkless or knotless embedding is closed under the operation of taking graph minors \cite{nt}, and the minor minimal family for intrinsically linked graphs was characterized by Robertson, Seymour and Thomas \cite{rst}.  In a sharp contrast, Foisy, Howards and Rich \cite{FHR} show that this is not the case for intrinsic linking in directed graphs-- a directed graph that has an embedding with no non-split consistently oriented link may have a minor that is intrinsically linked.  In Section \ref{consistent_section}, we examine the operation of vertex expansion, and answer a question of \cite{FHR}.  We then introduce the notions \emph{consistent edge contraction} and \emph{H-cyclic subcontraction}.  Consistent edge contraction preserves the property of having a linkless embedding for digraphs, and H-cyclic subcontraction does as well under mild assumptions. Consistent edge contraction is used for some of our proofs, but further investigation is needed to determine the best analogue of minors for studying intrinsically linked directed graphs, and whether any of the possible operations will lead to a finite forbidden set for intrinsically linked directed graphs.

Finally, in Section \ref{density_section} we note an additional difference between intrinsic linking in the directed and undirected cases.  We consider the extremal problem of the maximum number of edges for a graph $G$ or a digraph $\overline{G}$ on $n$ vertices to admit a linkless embedding. For undirected graphs, there is a constant $c$ such that any graph on $n$ vertices with more than $cn$  edges contains a $K_6$ minor \cite{mader} \cite{thom}, hence is intrinsically linked. However for undirected graphs, as noted in \cite{FHR}, there are examples of directed graphs with $O(n^2)$ edges that are not intrinsically linked.  Thus, as $n$ becomes large, we should expect many directed graphs to be intrinsically linked as undirected graphs (that is, contain a non-split link in every embedding when ignoring edge orientations), but not be intrinsically linked as directed graphs (i.e. admit an embedding where all consistently oriented cycles form only split links). A similar result holds for intrinsic knotting.

\section{Vertex expansion and graph minors}
\label{consistent_section}

We first consider vertex expansion in digraphs, and then turn our attention to operations that are variations of edge contraction.  In \cite{FHR}, Question 5.1 asks if conducting a single vertex expansion of an intrinsically linked directed graph preserves the property of intrinsic linking for some choice of orientation for the new edge. We demonstrate the answer to this question is ``no'' by providing a counterexample.

\begin{thm}
Let $\overline{G}$ be the directed graph in Figure \ref{counterexample}, and $\overline{G'}$ and $\overline{G''}$ the directed graphs obtained by vertex expansion at $a_2$ as shown in Figure \ref{counterexpand}.  Then $\overline{G}$ contains a consistently oriented non-split link in every spatial embedding, but both $\overline{G'}$ and $\overline{G''}$ have embeddings with no non-split consistently oriented link.
\label{vertex_exp_thm}
\end{thm}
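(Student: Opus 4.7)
The plan is to separate the theorem into two independent claims: an intrinsic-linking claim for $\overline{G}$ and a constructive claim that each of $\overline{G'}$ and $\overline{G''}$ admits a linkless (in the consistently oriented sense) spatial embedding. The two halves use essentially disjoint techniques, so I would organize the proof as two stand-alone arguments.

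For the first half, I would proceed in the Conway--Gordon spirit adapted to directed graphs. First, enumerate the consistently oriented cycles of $\overline{G}$ from Figure \ref{counterexample}. Then pick two families $\mathcal{A}$ and $\mathcal{B}$ of such cycles, consisting of vertex-disjoint pairs, so that the mod-$2$ sum
\[
\omega(\overline{G}, f) \;=\; \sum_{A\in\mathcal{A}}\sum_{B\in\mathcal{B}} \mathrm{lk}_2\bigl(f(A), f(B)\bigr)
\]
is an invariant of the spatial embedding $f$: this reduces to checking that every crossing change at an edge of $\overline{G}$ flips an even number of terms, which in turn amounts to a local combinatorial count of how many pairs from $\mathcal{A}\times\mathcal{B}$ use that edge. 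Once $\omega$ is shown to be an invariant, I would compute it for a single concrete embedding and check that it equals $1$, forcing at least one odd linking number in every embedding and hence a consistently oriented non-split link. A possible shortcut, worth trying first, is to look for a reduction of $\overline{G}$ to $\overline{DK_6}$ (which is intrinsically linked by \cite{FHR}) via a consistent edge contraction; if such a reduction exists, the intrinsic linking of $\overline{G}$ is immediate from the corresponding result later in the paper.

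For the second half, the argument is constructive. A single vertex expansion at $a_2$ only introduces cycles that traverse the new edge between the two split vertices, and with a chosen orientation half of these potential cycles are immediately excluded as non-consistent. So the list of consistently oriented cycles of $\overline{G'}$ (respectively $\overline{G''}$) differs from that of $\overline{G}$ in a short and explicit way. I would exhibit a book or straight-line embedding in which the new edge is drawn very short and close to the split vertex, then enumerate all unordered pairs of disjoint consistently oriented cycles and verify by direct linking-number computation that each such pair is split. If an offending pair appears, a local isotopy near the new edge, which does not disturb any of the other cycles, should suffice to unlink it, since only cycles through the new edge are affected.

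The main obstacle is the first half. Because $\overline{G}$ is designed to be a \emph{minimal} sort of counterexample—its single vertex expansion ceases to be intrinsically linked—its consistently oriented cycle structure is almost certainly not rich enough to contain a full directed copy of an already-known intrinsically linked graph, so the Conway--Gordon style sum must be tailored by hand rather than imported. The delicate step is choosing the pairing $\mathcal{A}\times\mathcal{B}$ so that (a) each edge of $\overline{G}$ is used by an even number of pairs, making $\omega$ an invariant, and (b) the total $\omega$ evaluated on a model embedding is odd; finding a pairing that satisfies both simultaneously, and no smaller intrinsically linked directed graph does, is where the real work lies.
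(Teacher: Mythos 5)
Both halves of your proposal are strategy outlines rather than arguments, and in each case the step you defer is precisely the one that carries the proof. For the first half, the paper does not build a bespoke Conway--Gordon invariant: it observes that $\overline{G}$ minus the $B$-to-$A$ edges contains $K_{3,3,1}$ as a minor, so every embedding has a triangle--square pair $(T,S)$ with odd linking number, with $T$ consistently oriented by construction; if $S$ is inconsistently oriented, the reversed edges from $B$ to $A$ let one either find a $2$-cycle linking $T$ oddly or swap inconsistent edges of $S$ without changing the mod-$2$ linking number (the technique of Theorem 3.5 of \cite{FHR}). Your proposed shortcut of reducing to $\overline{DK_6}$ by consistent edge contraction cannot work: $\overline{G}$ has only seven vertices and is not a consistent vertex expansion of $\overline{DK_6}$. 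Your main route --- hand-tailoring families $\mathcal{A},\mathcal{B}$ so that every edge lies in an even number of pairs while the total is odd --- is exactly the part you acknowledge you have not carried out, and you give no evidence that such a pairing exists for this graph; as written, the first half is unproven.

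The second half has a more specific gap. Since $\overline{G}$ is intrinsically linked, any embedding of $\overline{G'}$ yields, after contracting the new edge, an embedding of $\overline{G}$ that must contain a consistently oriented non-split link; so you cannot hope to verify that ``each such pair is split'' unless the one unavoidable link is arranged to pass through $a_2$ in a way that the expansion renders inconsistent. The missing ingredient is the result of \cite{fmellor} that $K_{3,3,1}$ admits an embedding with a \emph{unique} non-trivial link, together with the symmetry that lets one choose which triangle--square partition realizes it; the paper chooses that pair so that, after expansion, the corresponding cycle in $\overline{G'}$ (respectively $\overline{G''}$) becomes inconsistently oriented, and then uniqueness of the link in the contracted embedding rules out any other consistently oriented non-split link. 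Your fallback --- that ``a local isotopy near the new edge'' will unlink any offending pair --- fails whenever the offending pair avoids the new edge, and verifying that linking numbers vanish does not certify that a two-component link is split.
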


\begin{proof}

We can see that the directed graph $\overline{G}$ is intrinsically linked as follows. For a given embedding of $\overline{G}$, when ignoring the edges from vertex set $B$ to $A$, we may find a triangle $T$  and a square $S$ with odd linking number, as $K_{3,3,1}$ is a minor of this graph.   Note that by construction, $T$ is consistently oriented.  If $S$ is not consistently oriented, we may construct a consistently oriented cycle that has odd linking number with $T$ using the edges directed from $B$ to $A$ and the techniques described in the proof of Theorem 3.5 of \cite{FHR}.  If $S$ contains the edge $a_2b_2$, this merely forces the orientation for the construction.

We now show that $\overline{G'}$ and $\overline{G''}$ admit linkless embeddings.  For simplicity, it will suffice to consider embeddings where all bigons bound disks.  There is an embedding of $K_{3,3,1}$ that contains a single nontrivial link, consisting of a triangle and square \cite{fmellor}, and by symmetry, every triangle-square partition is the lone non-trivial link in some embedding.  Let $f$ be an embedding of $K_{3,3,1}$ where the sole non-trivial link is $(x,a_2,b_3) (a_1,b_1,a_3,b_2)$.  This gives an embedding of $\overline{G}$, which we will call $f$ by abuse of notation, where the only nontrivial consistently oriented links are $(x,a_2,b_3) (a_1,b_1,a_3,b_2)$ and $(x,a_2,b_3) (a_1,b_2,a_3,b_1)$.  Let $f'$ be an embedding of $K_{3,3,1}$ where the sole non-trivial link is $(x,a_1,b_1)(a_2,b_2,a_3,b_3)$.  Due to the single edge from $a_2$ to $b_2$ in $\overline{G}$, the embedding $f'$ of $\overline{G}$ contains a single consistently oriented non-trivial link $(x,a_1,b_1) (a_2,b_2,a_3,b_3)$.

\begin{figure}
\includegraphics[scale=.5]{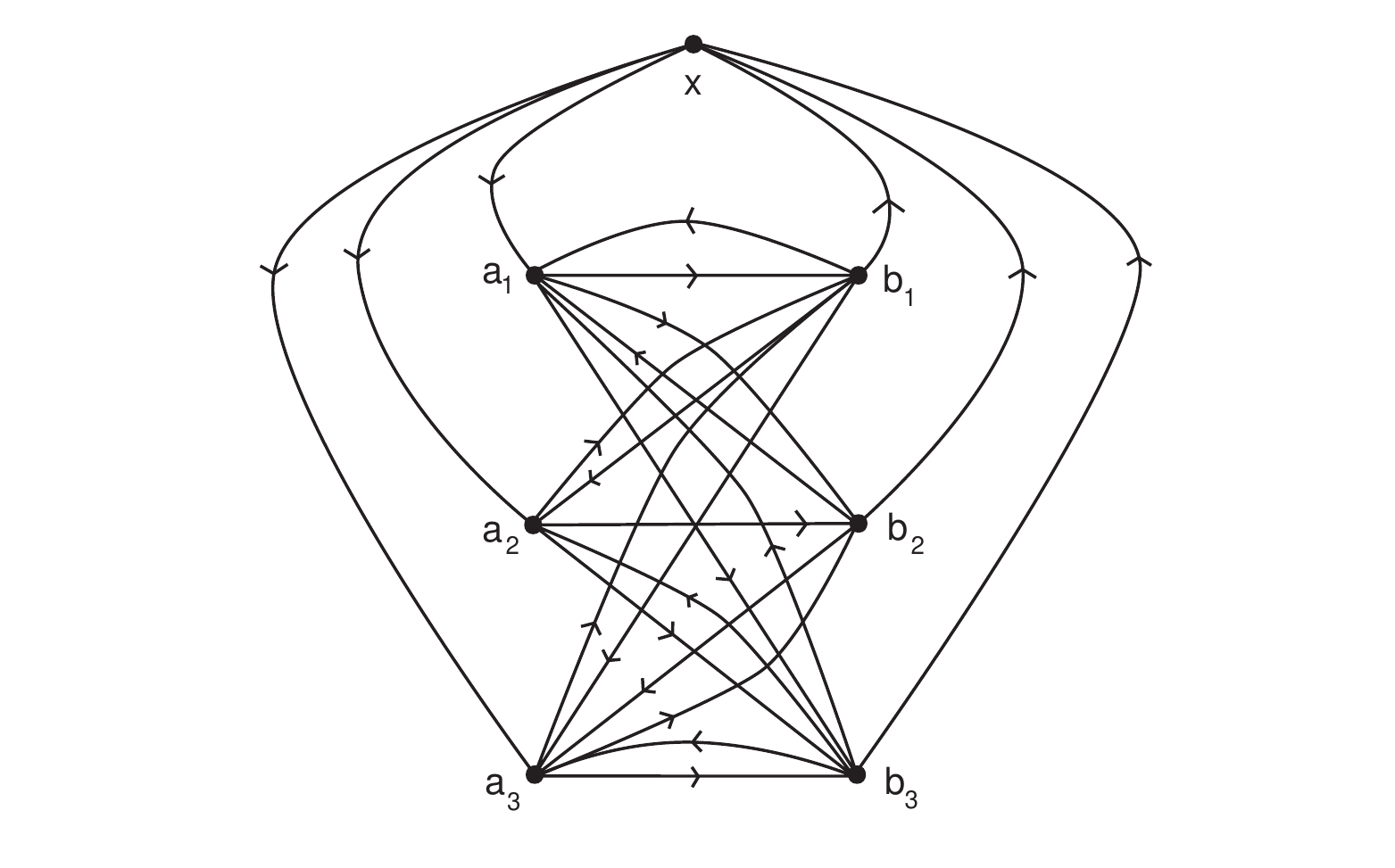}
\caption{The digraph $\overline{G}$ for Theorem \ref{vertex_exp_thm}. Note the single edge from $a_2$ to $b_2$.}
\label{counterexample}
\end{figure}

\begin{figure}
\includegraphics[scale=.5]{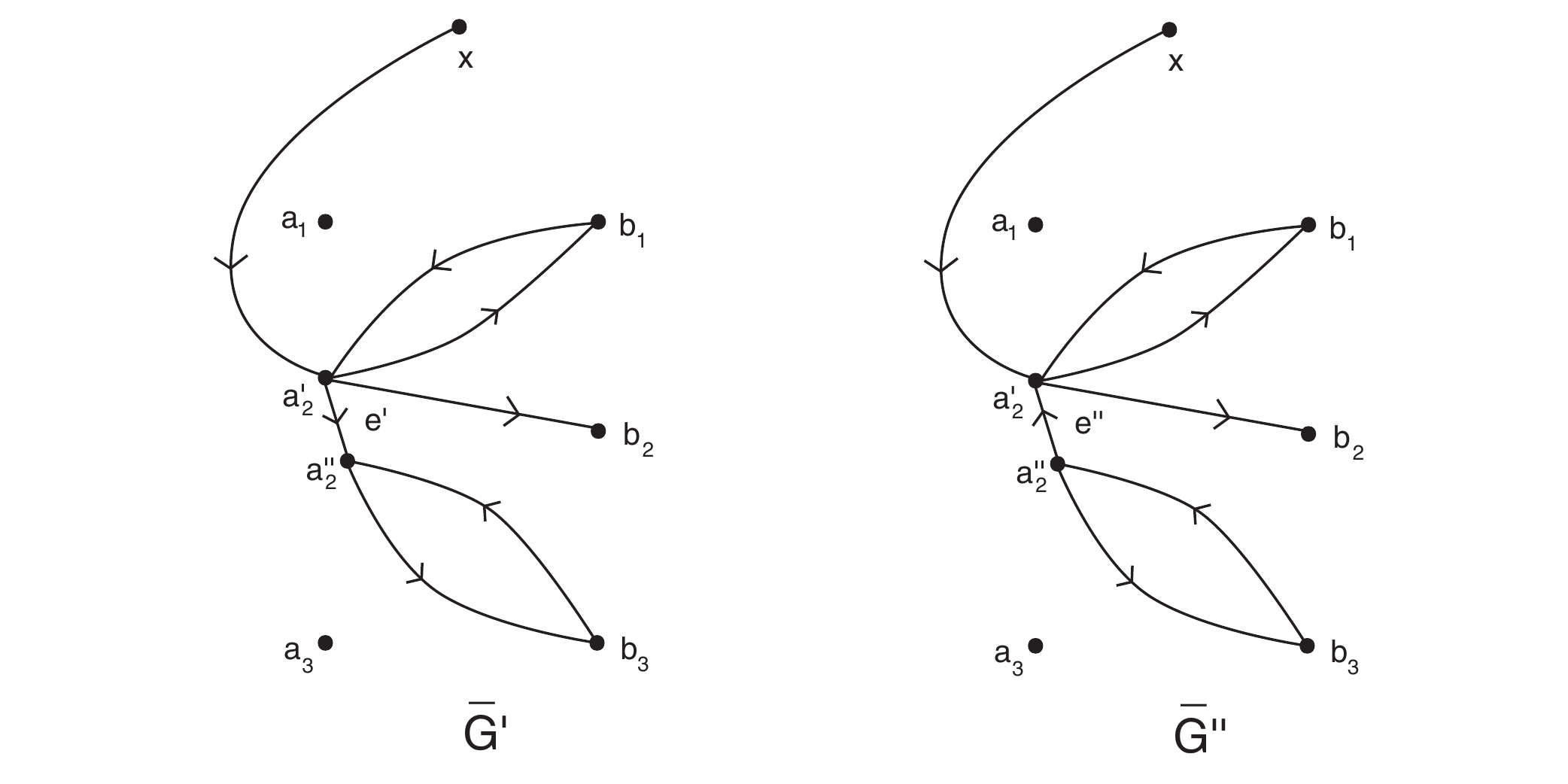}
\caption{The edges adjacent to $a'_2$ and $a''_2$ in $\overline{G'}$ and $\overline{G''}$.}
\label{counterexpand}
\end{figure}

The graphs $\overline{G'}$ and $\overline{G''}$ are formed from $\overline{G}$ by expansion of vertex $a_2$.  By shrinking the edge $e'$ or $e''$, we may cause vertices $a'_2$ and $a''_2$ and edge $e'$ (or $e''$) to lie in a neighborhood of vertex $a_2$, so we may extend the embeddings $f$ and $f'$ to $\overline{G'}$ and $\overline{G''}$.  Now for the digraph $\overline{G'}$, the cycle $(a''_2,a'_2,b_2,a_3,b_3, )$ is inconsistently oriented.  Suppose that the embedding $f'$ of $\overline{G'}$ contained a consistently oriented non-split link that did not use the edge $e'$, then embedding $f'$ of $\overline{G}$ would contain more than one consistently oriented non-split link, a contradiction.  If embedding $f'$ of $\overline{G'}$ contains a consistently oriented non-split link that does contain $e'$, then that link is preserved by contracting $e'$, again creating a second, distinct non-split link in embedding $f'$ of $\overline{G}$. Thus, the embedding $f'$ of $\overline{G'}$ has no consistently oriented link.   Similarly, for $\overline{G''}$, the cycle $(x,a'_2,a''_2,b_3)$ is inconsistently oriented, so the embedding $f$ of $\overline{G''}$ has no consistently oriented link.

\end{proof}

Foisy-Howard-Rich noted in Theorem 4.2 of \cite{FHR} that standard vertex expansion does not preserve intrinsic linking in directed graphs, and Theorem \ref{vertex_exp_thm} shows that controlling the edge orientation in arbitrary vertex expansion is also insufficient.  We introduce two operations to address these challenges: \emph{consistent edge contraction} and \emph{H-cyclic subcontraction}. We describe these each in turn.

Let $\overline{G'}$ be the digraph obtained from $\overline{G}$ by splitting a vertex $v$ into vertices $v_1$ and $v_2$, and adding an edge $e$ directed from $v_1$ to $v_2$. If $v_1$ is a sink in $\overline{G'}\setminus e$ \textbf{or} $v_2$ is a source in $\overline{G'}\setminus e$, we will say $\overline{G'}$ is obtained from $\overline{G}$ by a \emph{consistent vertex expansion}.   Similarly, if $\overline{G}$ is obtained from $\overline{G'}$ by contracting such an edge, we will say $\overline{G}$ is obtained by a \emph{consistent edge contraction}.  See Figure \ref{minor_example_fig}.

\begin{figure}
\includegraphics[scale=.65]{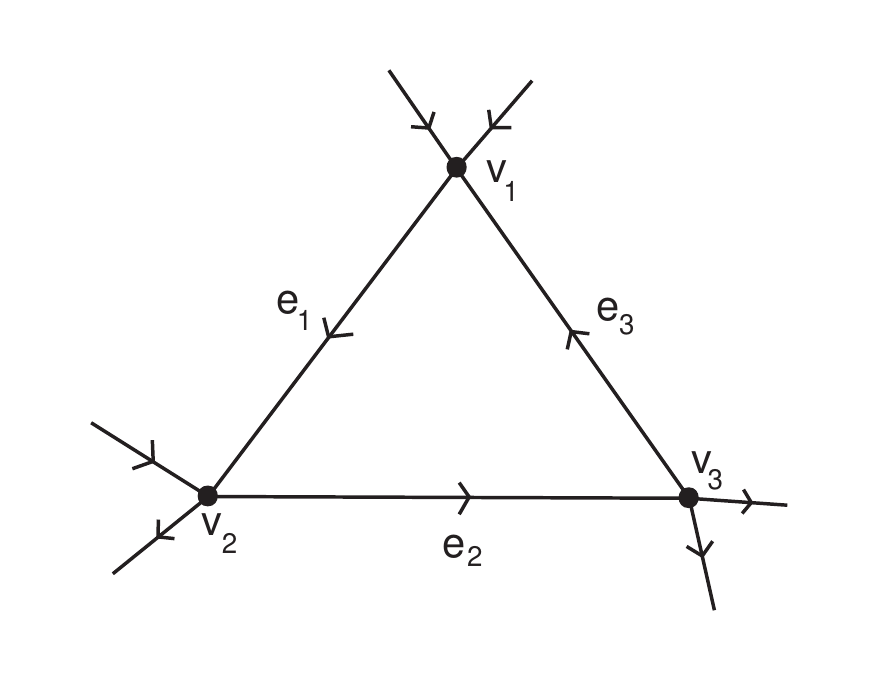}
\caption{The edges $e_1$ and $e_2$ are candidates for consistent edge contraction, but edge $e_3$ is not.  Note also that the vertices $v_1, v_2, v_3$ can be replaced by a single vertex using H-cyclic subcontraction, but no subset of them can be H-cyclicly subcontracted.}
\label{minor_example_fig}
\end{figure}

\begin{prop}
Let $\overline{G}$ and $\overline{G'}$ be directed graphs. If $\overline{G}$ is intrinsically linked and $\overline{G'}$ is obtained from $\overline{G}$ by a consistent vertex expansion, then $\overline{G'}$ is instrinsically linked as a digraph.   Equivalently, if $\overline{G'}$ has a linkless embedding, and $\overline{G}$ is obtained from $\overline{G'}$ by a consistent edge contraction, then $\overline{G}$ has a linkless embedding.  That is, the property of having a linkless embedding is closed under consistent edge contraction.
\end{prop}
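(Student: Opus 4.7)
The plan is to verify the equivalent statement that contracting a qualifying edge of a linklessly embedded digraph produces another linklessly embedded digraph. Let $e$ be the edge of $\overline{G'}$ directed from $v_1$ to $v_2$ to be contracted, and assume without loss of generality that $v_1$ is a sink in $\overline{G'}\setminus e$ (the case where $v_2$ is a source is symmetric). Fix an embedding $f'$ of $\overline{G'}$ in $S^3$ with no consistently oriented non-split link, and produce an embedding $f$ of $\overline{G}$ by contracting $e$ inside a small ball $B$, so that $f$ and $f'$ agree outside $B$ and the merged vertex $v$ lies inside $B$.

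Suppose toward contradiction that $f$ contains a consistently oriented non-split link $L = C_1 \cup \cdots \cup C_k$. I would lift each $C_i$ to a consistently oriented cycle $C'_i$ of $\overline{G'}$ as follows. If $C_i$ misses $v$, set $C'_i = C_i$. Otherwise $C_i$ contains exactly one subpath of the form $u \to v \to w$, since a simple cycle visits $v$ at most once. The outgoing edge $v \to w$ must lift to an edge $v_2 \to w$ of $\overline{G'}$, because $v_1$ has no outgoing edges in $\overline{G'}\setminus e$; the incoming edge $u \to v$ lifts to either $u \to v_1$ or $u \to v_2$. In the first subcase I replace the subpath by $u \to v_1 \to v_2 \to w$, inserting $e$; in the second subcase, by the direct path $u \to v_2 \to w$. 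Either replacement is consistently oriented, so each $C'_i$ is a consistently oriented cycle in $\overline{G'}$.

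Because the contraction occurred inside the small ball $B$, the lifted link $L' = C'_1 \cup \cdots \cup C'_k$ agrees with $L$ outside $B$ and differs inside $B$ only by arcs homotopic rel boundary, so $L'$ is ambient isotopic to $L$ in $S^3$ and therefore is a non-split link realized by $f'$, contradicting the choice of $f'$. I expect the consistency check in the second paragraph to be the only nontrivial point and precisely where the sink/source hypothesis enters: without it, a lifted cycle through $v$ could be forced to traverse $e$ against its orientation (e.g.\ an entry via $u\to v_2$ followed by an exit via $v_1\to w$), and consistent orientation would be lost. The equivalence of the two formulations of the proposition is immediate by contrapositive.
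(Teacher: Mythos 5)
Your proof is correct and follows essentially the same route as the paper: contract $e$ into a small ball within a linkless embedding of $\overline{G'}$, and use the correspondence between consistently oriented cycles of $\overline{G}$ and $\overline{G'}$ to lift any putative link back to an isotopic one in $f'$. The only difference is that you spell out explicitly the cycle-lifting argument (and where the sink/source hypothesis is needed), which the paper compresses into the one-line assertion that the consistently oriented cycles are in one-to-one correspondence.
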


\begin{proof}
There is a one-to-one correspondence of consistently oriented cycles in $\overline{G}$ and $\overline{G'}$ under these moves, so we may proceed as in the undirected case.

In any embedding $f'$ of $\overline{G'}$, we may shrink the edge $e$ by isotopy until it is contained a ball disjoint from the rest of $\overline{G'}$.  This gives an embedding $f$ of $\overline{G}$.  If $f$ contains a consistently oriented non-split link, then isotopic consistently oriented cycles can be found in $f'$, so it contains a link as well.
\end{proof}

We note that the operation of taking minors of directed graphs described by \cite{FHR} corresponds to the \emph{weak subcontraction} of Jagger \cite{jag}.  Jagger also defines \emph{strong subcontraction} as a generalization of undirected graph minors to directed graphs.  If $\overline{H}$ is obtained from $\overline{G}$ by strong subcontraction, any consistently oriented cycle of $\overline{H}$ corresponds to at least one consistently oriented cycle of $\overline{G}$.   Hence, strong subcontraction avoids the problems highlighted by Theorem \ref{vertex_exp_thm}, as neither $\overline{G'}$ nor $\overline{G''}$ strongly subcontracts to $\overline{G}$.  Strong subcontraction is defined combinatorially and generally does not have a clear geometric interpretation, but it may be a promising technique for studying intrinsically linked directed graphs. We will define an operation we call \emph{H-cyclic subcontraction} that is more restrictive than strong subcontraction, but easier to understand geometrically.

First we review some definitions.  Recall that a digraph $\overline{G}$ is strongly connected if for any pair of vertices $v,w$ in $\overline{G}$, there exists a directed path from $v$ to $w$. Let $\overline{H}$ be a digraph with vertex set $\{ w_0, w_1 \ldots w_r\}$ and $V_0, V_1, \ldots V_r$ a partition of the vertices of $\overline{G}$ such that if $w_iw_j$ is an edge of $\overline{H}$ there exist $v_{ik} \in V_i$ and $v_{jl} \in V_j$ such that $v_{ik}v_{jl}$ is an edge of $\overline{G}$.  Then $\overline{H}$ is a minor of $\overline{G}$ if the induced subgraph $G(V_i)$ is connected as an undirected graph for all $V_i$ (that is, weakly connected as a digraph).  The graph $\overline{H}$ is a strong subcontraction (or strong minor) of $\overline{G}$ if the induced subgraph $\overline{G(V_i)}$ is strongly connected as a digraph for all $V_i$.  We define $\overline{H}$ to be an H-cyclic minor (or H-cyclic subcontraction) if the induced subgraph $\overline{G(V_i)}$ contains a consistently oriented Hamiltonian cycle for all $V_i$.  Note that contracting a symmetric pair of edges is a special case of H-cyclic subcontraction.

\begin{prop}
Let $\overline{H}$ be an H-cyclic minor of a digraph $\overline{G}$. Then if $\overline{G}$ has an embedding where no pair of consistently oriented cycles have non-zero linking number, then $\overline{H}$ has an embedding where no pair of consistently oriented cycles have non-zero linking number.
\label{almost_closed_prop}
\end{prop}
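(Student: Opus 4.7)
The plan is to argue contrapositively: given an embedding $f$ of $\overline{G}$ in which no two consistently oriented cycles link, I will construct an embedding $g$ of $\overline{H}$ with the same property. Since each $G(V_i)$ is weakly connected, I can first isotope $f$ so that each $f(\overline{G(V_i)})$ lies inside a small open ball $B_i$, with the $B_i$ pairwise disjoint and meeting the remaining edges of $f(\overline{G})$ only in tails that enter the ball and terminate at a vertex of $V_i$. Choose a point $w_i\in B_i$. For each edge $w_iw_j$ of $\overline{H}$, pick one representative $\overline{G}$-edge between $V_i$ and $V_j$ and define the corresponding edge of $g$ to be that edge's portion outside $B_i\cup B_j$, closed up by short arcs inside each ball joining the endpoint to $w_i$ or $w_j$ (chosen to avoid $f(\overline{G(V_i)})$ and $f(\overline{G(V_j)})$). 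Discard the $\overline{G}$-edges between $V_i$ and $V_j$ not used as representatives; what remains is the embedding $g$ of $\overline{H}$.

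Suppose for contradiction that $g$ contains disjoint consistently oriented simple cycles $\gamma_1,\gamma_2$ with $\mathrm{lk}(\gamma_1,\gamma_2)\ne 0$. I lift each $\gamma_k$ to a consistently oriented simple cycle $\tilde\gamma_k$ in $f(\overline{G})$ as follows. Traversing $\gamma_k$, each pass through a vertex $w_i$ enters along a truncation of some $\overline{G}$-edge $v_cv_a$ (with $v_a\in V_i$) and exits along a truncation of some $\overline{G}$-edge $v_bv_d$ (with $v_b\in V_i$). The H-cyclic hypothesis supplies a consistently oriented Hamiltonian cycle $C_i$ of $\overline{G(V_i)}$, whose unique directed arc $P_i$ from $v_a$ to $v_b$ is a simple path. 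I replace the piece of $\gamma_k$ inside $B_i$ by the full $\overline{G}$-edges $v_cv_a$ and $v_bv_d$ together with $P_i$. The resulting $\tilde\gamma_k$ is a consistently oriented closed walk in $\overline{G}$; it is a simple cycle because $\gamma_k$ visits each $w_i$ at most once, so each $P_i$ is used at most once.

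Finally, disjoint simple cycles in $\overline{H}$ share no vertex, so at most one of $\gamma_1,\gamma_2$ enters any given $B_i$; combined with the use of a single representative edge per edge of $\overline{H}$, this makes $\gamma_1,\tilde\gamma_1,\gamma_2,\tilde\gamma_2$ pairwise disjoint. For each $k$, the curves $\gamma_k$ and $\tilde\gamma_k$ differ only inside the balls $B_i$ with $w_i\in\gamma_k$; each such $B_i$ is simply connected and disjoint from $\tilde\gamma_{3-k}$, so the two in-ball arcs (sharing endpoints) are homologous rel endpoints in $B_i\subset S^3\setminus\tilde\gamma_{3-k}$. Summing, $\gamma_k$ is homologous to $\tilde\gamma_k$ in $S^3\setminus\tilde\gamma_{3-k}$, hence $\mathrm{lk}(\tilde\gamma_1,\tilde\gamma_2)=\mathrm{lk}(\gamma_1,\gamma_2)\ne 0$, contradicting the hypothesis on $f$. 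The main obstacle, and the reason the H-cyclic assumption is needed instead of mere strong connectedness, is producing a canonical \emph{simple} consistently oriented arc in $\overline{G(V_i)}$ between the prescribed vertices $v_a$ and $v_b$; the Hamiltonian cycle supplies such an arc automatically, whereas an arbitrary consistently oriented path from a strong subcontraction could revisit vertices and destroy simplicity of $\tilde\gamma_k$.
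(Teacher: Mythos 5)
Your lifting idea --- route each passage of $\gamma_k$ through $w_i$ along the directed arc of the Hamiltonian cycle $C_i$ from the entry vertex to the exit vertex, then compare linking numbers homologically --- is sound and is essentially the same key idea the paper uses. The problem is your very first step: the claimed isotopy does not exist in general. You assert that because each $G(V_i)$ is weakly connected you may isotope $f$ so that $f(\overline{G(V_i)})$ lies in a ball $B_i$ meeting the rest of the graph only in tails of edges incident to $V_i$. Connectivity guarantees this only for trees. Here $\overline{G(V_i)}$ contains a cycle (the Hamiltonian cycle $C_i$) whenever $|V_i|\ge 2$, and if $f(C_i)$ has non-zero linking number with some cycle $D$ of $f(\overline{G}\setminus V_i)$, then every ball containing $f(C_i)$ must meet $D$, and no ambient isotopy can change this. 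The hypothesis of the proposition forbids linking only between pairs of \emph{consistently oriented} cycles, so such a $D$ (inconsistently oriented) is entirely possible. The gap is not cosmetic, because your later step ``each such $B_i$ is \dots disjoint from $\tilde\gamma_{3-k}$'' is exactly where this separation is used: if you replace $B_i$ by a regular neighborhood of $f(\overline{G(V_i)})$ (a handlebody), the two in-ball arcs sharing endpoints can differ by a nontrivial class in the first homology of that handlebody, i.e.\ by a cycle of $G(V_i)$ that need not be consistently oriented and hence may link $\tilde\gamma_{3-k}$ without contradicting the hypothesis on $f$.

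The paper sidesteps this by contracting only a Hamiltonian \emph{path} $P_0$ ($C_0$ minus one edge), and an embedded arc always has a ball neighborhood; a cycle $L_1$ of $\overline{H}$ through $w_0$ is then honestly isotopic to $P_1\cup P_{ij}$ with $P_{ij}\subset P_0$. The price is that $P_{ij}$ may be directed the wrong way, which is repaired homologically: $[P_1\cup P_{ij}]=[P_1\cup P_{ji}]+[C_0]$ in the complement of $L_2$, and both $P_1\cup P_{ji}$ and $C_0$ are consistently oriented, so one of them gives the forbidden link. To salvage your set-up, put only a Hamiltonian path of each $C_i$ into the ball and add the same theta-curve argument for the case where your arc $P_i$ would have to use the remaining edge of $C_i$; as written, the proof is incomplete.
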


\begin{proof}
If there is more than one partition $V_i$ of $\overline{G}$ that contains more than one vertex, we may proceed sequentially. Thus, we can assume $V_0 = \{v_{01} \ldots v_{0n}\}$ and $V_i = v_{i0}$ for all $i \neq 0$.  Let $f$ be an embedding of $\overline{G}$ with no link with non-zero linking number.  Let $C_0$ be the consistently oriented Hamiltonian cycle of $\overline{G(V_0)}$.   We form an embedding of $\overline{H}$ as follows:  in $f(G)$, delete one edge of $C_0$ to form a consistent path $P_0$ that includes all vertices of $V_0$.  Delete all edges in $\overline{G(V_0)}\setminus P_0$.  Contract $P_0$.   This gives an embedding $f$ of $\overline{H}$.

If $f(\overline{H})$ has a link $L_1, L_2$ with nonzero linking number that does not use $w_0$, then this link is contained in $f(\overline{G})$, a contradiction.  So we may assume that $w_0 \in L_1$.  By construction, there are vertices $v_{0i}$ and $v_{0j}$ such that $L_1$ corresponds to a path $P_1$ in $f(\overline{G} \setminus \overline{G(V_0)})$ that connects $v_{0i}$ and $v_{0j}$.  Let $P_{ij}$ be the subpath of $P_0$ that connects $v_{0i}$ and $v_{0j}$.  Then $P_1 \cup P_{ij}$ in $f(\overline{G})$ is isotopic to $L_1$.  If $P_1 \cup P_{ij}$ is consistently oriented, then $lk(P_1 \cup P_{ij}, L_2) \neq 0$, a contradiction.

If $P_1 \cup P_{ij}$ is not consistently oriented, we may form $P_{ji} = C_0 \setminus P_{ij}$.  Then $P_1 \cup P_{ji}$ is consistently oriented, as is $C_0$.  Since $C_0, P_1 \cup P_{ji}$ and $P_1 \cup P_{ij}$ divide $S^2$ into regions, and $L_2$ has non-zero linking number with $P_1 \cup P_{ij}$, $L_2$ must have non-zero linking number with either $C_0$ or $P_1 \cup P_{ji}$.   This gives a consistently oriented link with non-zero linking number in $f(\overline{G})$, a contradiction.
\end{proof}

Note that every undirected graph that is intrinsically linked contains a link with non-zero linking number, due to the classification of forbidden minors for intrinsically linked graphs \cite{rst}.  If every embedding of an intrinsically linked digraph must contain a link with non-zero linking number, then Proposition \ref{almost_closed_prop} shows that the property of having a linkless embedding is closed under H-cyclic subcontraction.  A similar caveat is necessary for the result claimed in Theorem 4.3 of \cite{FHR}.

\section{An Intrinsically Knotted Directed Graph}
\label{i_knot_section}

The following is a direct corollary of a result of Taniyama-Yasuhara \cite{ty}, and independently Foisy \cite{foisy}, applied to the directed graph case.  Let $D_4$ be the graph pictured in Figure \ref{D4Bar_pic}.

\begin{cor} Let $\overline{D_4}$ be an oriented $D_4$ graph, with all edges oriented in the counterclockwise direction, or all edges oriented in the clockwise direction.  If $lk(C_1,C_3) \neq 0$ modulo 2 and $lk(C_2,C_4) \neq 0$ modulo 2, then $\overline{D_4}$ contains a consistently oriented Hamiltonian cycle that is a non-trivial knot.
\label{D4_cor}
\end{cor}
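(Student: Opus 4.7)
The plan is to reduce this directly to the undirected result of Taniyama--Yasuhara \cite{ty} and Foisy \cite{foisy}. That result says that, for any spatial embedding of the undirected graph $D_4$, one of its Hamiltonian cycles has Arf invariant congruent modulo $2$ to $lk(C_1,C_3)\cdot lk(C_2,C_4)$. Under the linking-number hypotheses of the corollary, this product is $1$ modulo $2$, so that Hamiltonian cycle has nonzero Arf invariant and is, in particular, a non-trivial knot.

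The remaining task is to confirm that the Hamiltonian cycle $H$ produced by the undirected theorem is consistently oriented once every edge of $D_4$ is oriented uniformly in the same rotational sense with respect to the planar picture in Figure~\ref{D4Bar_pic}. I would do this by writing $H$ as a concatenation of arcs coming from the cycles $C_1,\dots,C_4$ and checking edge-by-edge that each such arc is traversed in the prescribed rotational direction. Because the uniform orientation already makes each individual $C_i$ consistently oriented, once $H$ is decomposed into such arcs the consistency is inherited from the ambient orientation at every splice point. Since the ``all clockwise'' and ``all counterclockwise'' orientations differ by reversing every edge, they yield the same underlying cycle with opposite consistent orientation, and it is enough to verify one case.

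The main obstacle is identifying the specific Hamiltonian cycle used in the Arf-invariant formula of \cite{ty} and \cite{foisy} and matching it to the planar labelling in Figure~\ref{D4Bar_pic}. Once this identification is made, the orientation check is a short inspection: each edge of $H$ either lies on the boundary of one of the four labelled cycles or is a shared edge between two of them, and in both situations the uniform rotational orientation on $D_4$ assigns it the direction prescribed by traversing $H$. The corollary then follows immediately from the non-triviality of $H$ established in the first paragraph.
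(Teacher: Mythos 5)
Your first paragraph matches the paper's proof exactly: apply the undirected Taniyama--Yasuhara/Foisy result to conclude that some Hamiltonian cycle of $D_4$ has odd Arf invariant, hence is knotted. But your second and third paragraphs turn the remaining orientation check into an open problem you do not actually solve: you say the "main obstacle is identifying the specific Hamiltonian cycle used in the Arf-invariant formula" and then only sketch how you \emph{would} verify consistency for that cycle. No such identification is needed, and indeed the undirected result does not single out which Hamiltonian cycle is knotted. The paper's observation is that \emph{every} Hamiltonian cycle of $\overline{D_4}$ is consistently oriented: a Hamiltonian cycle must select one edge from each parallel pair and traverse the four vertices in their cyclic order, and under the all-counterclockwise (or all-clockwise) orientation every edge points in that cyclic direction. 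So whichever Hamiltonian cycle the undirected theorem produces, it is automatically consistent, and the corollary follows with no case analysis.

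A secondary point: your claim that "the uniform orientation already makes each individual $C_i$ consistently oriented" is false. The $C_i$ are the bigons formed by the parallel edge pairs, and under the uniform orientation both edges of such a bigon point the same way, so as a cycle each $C_i$ is \emph{inconsistently} oriented. This error happens not to matter, since the hypothesis on the $C_i$ concerns only their mod $2$ linking numbers and the corollary makes no claim that the $C_i$ themselves are consistent, but it suggests the orientation bookkeeping in your planned verification is not on solid ground. Replace that verification with the one-line observation above and the proof is complete.
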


\begin{proof}
Ignoring edge orientations, the Taniyama-Yasuhara/Foisy result implies that there is a Hamiltonian cycle in $\overline{D_4}$ with odd Arf invariant.   As all Hamiltonian cycles of $\overline{D_4}$ are consistently oriented, we have the result.
\end{proof}

\begin{figure}
\includegraphics[scale=.35]{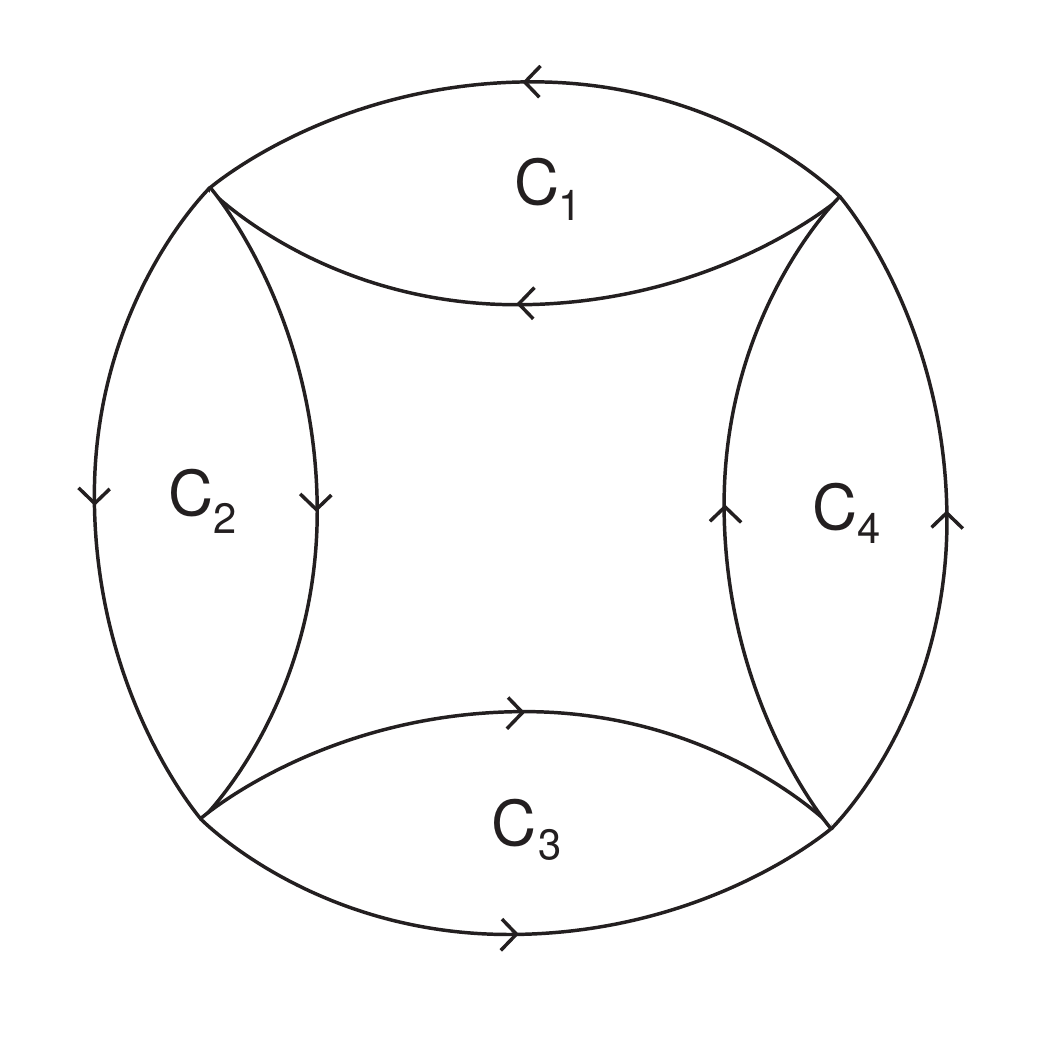}
\caption{The counterclockwise oriented $\overline{D_4}$.}
\label{D4Bar_pic}
\end{figure}

If every embedding of a digraph $\overline{G}$ can be reduced to a copy of $\overline{D_4}$ via edge deletions and consistent edge contractions (in the sense of Section \ref{consistent_section}), and this copy of $\overline{D_4}$ satisfies the conditions of Corollary \ref{D4_cor}, then every embedding of $\overline{G}$ contains a consistently oriented cycle that is a non-trivial knot.  That is, $\overline{G}$ is intrinsically knotted as a digraph.

We now produce an example of a consistently oriented intrinsically knotted digraph $\overline{G}$ on 11 vertices.  We construct $\overline{G}$ from two vertex sets, $A$ of five vertices ($a_1, a_2, a_3, a_4, a_5$) and $B$ of four ($b_1, b_2, b_3, b_4$), and 2 distinguished vertices $v$ and $w$ as follows.  Add edges so that $a_1, a_2, a_3,b_1,b_2,b_3$ form a $K_{3,3}$ with all edges directed from $a_i$ to $b_j$.  Add edges from $a_1, a_2,a_3$ to $v$ and edges from $v$ to $b_1,b_2,b_3$.  Add a 3 cycle of arbitrary orientation between $b_1,b_2,b_3$.  Add 3 edges, directed from each $b_i$ to $b_4$.  Add edges from each $b_i$ to each $a_j$.  Add an edge from each $b_i$ to $w$ and from $w$ to each $a_j$.  Add edges so that $a_1, a_2, a_3$ form a $\overline{DK_3}$.  Add an edge between $a_4,a_5$ and from each of $a_4$ and $a_5$ to each of $a_1, a_2, a_3$.

\begin{thm}
Every embedding of $\overline{G}$ contains a consistently oriented cycle that is a non-trivial knot.
\end{thm}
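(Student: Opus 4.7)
The plan is to follow the recipe that the authors flag just before the theorem: show that in every embedding of $\overline{G}$, a consistently oriented copy of $\overline{D_4}$ satisfying the hypotheses of Corollary \ref{D4_cor} can be produced by edge deletions and consistent edge contractions. The conclusion will then be immediate from Corollary \ref{D4_cor}, together with the fact that consistent edge contractions preserve consistently oriented cycles up to isotopy.

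First, I would exploit the two apex-like vertices $v$ and $w$ to run two independent $K_{3,3,1}$-linking arguments. The vertices $\{a_1, a_2, a_3, b_1, b_2, b_3, v\}$ together with the edges $a_i \to b_j$, $a_i \to v$, $v \to b_j$, and the reverse edges $b_i \to a_j$ (plus the internal $\overline{DK_3}$ on the $a_i$) carry an underlying $K_{3,3,1}$ in which every relevant triangle $C_1 = (a_i, v, b_j)$ and every relevant square $C_3 = (a_k, b_\ell, a_m, b_n)$ is consistently oriented. The standard mod-$2$ sum-of-squares argument (as used in Theorem \ref{vertex_exp_thm} and Theorem 3.5 of \cite{FHR}) then forces at least one consistently oriented pair $(C_1, C_3)$ with $\mathrm{lk}(C_1, C_3) \equiv 1 \pmod 2$ in every embedding. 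Running the symmetric argument on $\{a_1, a_2, a_3, b_1, b_2, b_3, w\}$, whose edges go $b_i \to w$, $w \to a_j$, $b_i \to a_j$, closed via $a_i \to b_j$, produces a second consistently oriented linked pair $(C_2, C_4)$ with $\mathrm{lk}(C_2, C_4) \equiv 1 \pmod 2$.

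Next, I would assemble the four cycles $C_1, C_2, C_3, C_4$ into a consistently oriented subgraph isomorphic to $\overline{D_4}$. This is where the auxiliary pieces of $\overline{G}$ pay off: the $\overline{DK_3}$ on $\{a_1, a_2, a_3\}$, the triangle on $\{b_1, b_2, b_3\}$ (of arbitrary orientation), the sink vertex $b_4$ with its three edges from the $b_i$, and the sources $a_4, a_5$ with their edges into the $a_i$, supply enough connector edges and orientation flexibility to bridge the two linked pairs regardless of which specific cycles appeared in Step~1. Using these connectors, delete all non-participating edges and then apply consistent edge contractions (in the sense of Section \ref{consistent_section}) to collapse the four cycles into the four cycles of a $\overline{D_4}$ oriented entirely counterclockwise (say). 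The linking data from Step~1 survives the contractions, so $\mathrm{lk}(C_1, C_3)$ and $\mathrm{lk}(C_2, C_4)$ remain odd, and Corollary \ref{D4_cor} delivers a consistently oriented nontrivial knotted Hamiltonian cycle of this $\overline{D_4}$, which lifts back to a consistently oriented nontrivial knotted cycle in the original embedding of $\overline{G}$.

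The main obstacle I anticipate is Step~2, the assembly. The triangle-square links produced by the $v$-apex and $w$-apex arguments are a priori independent: they can share the same $a_i$ and $b_j$ vertices in arbitrary ways, and the induced cyclic orientations on the $b_i$ or on the shared squares need not automatically match a single global counterclockwise orientation on $\overline{D_4}$. Verifying case-by-case (or, ideally, via a uniform argument using the symmetry of the $\overline{DK_3}$ on $\{a_1,a_2,a_3\}$ and the flexibility of the arbitrarily-oriented $(b_1,b_2,b_3)$-triangle) that the two pairs can always be spliced into a single consistently oriented $\overline{D_4}$, and that the contractions used to do so are genuinely consistent contractions that do not destroy the linking numbers in the quotient, is where I expect the bulk of the combinatorial care to lie.
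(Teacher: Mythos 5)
Your high-level strategy matches the paper's: run a $K_{3,3,1}$-linking argument at each of the two apexes $v$ and $w$, then splice the two odd-linking pairs into a consistently oriented $\overline{D_4}$ and invoke Corollary \ref{D4_cor}. But there is a genuine gap, and it sits exactly where you flag "the bulk of the combinatorial care": the assembly step is not just delicate bookkeeping, it is the actual content of the proof, and your setup makes it unlikely to go through. You run the second $K_{3,3,1}$ argument on $\{a_1,a_2,a_3,b_1,b_2,b_3,w\}$, i.e.\ on the \emph{same} six non-apex vertices as the first. The two linked pairs you obtain can then overlap essentially arbitrarily --- e.g.\ $C_2$ can share two or three vertices with $C_1$, or meet $C_3$ --- and no amount of edge deletion or consistent edge contraction can repair that, because $\overline{D_4}$ requires $C_1\cap C_3=\emptyset$, $C_2\cap C_4=\emptyset$, and adjacent cycles meeting in exactly one vertex. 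The extra vertices $b_4, a_4, a_5$ are in the graph precisely to avoid this collision, and your proposal never actually uses them in a linking argument, only as vaguely described "connectors."

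The paper's resolution is adaptive: first it finds the triangle--square pair at apex $v$, then uses the edge of the $(b_1,b_2,b_3)$-triangle to shorten the square to a triangle $T_3$ (so that the first pair occupies only five of the six $a_i,b_j$ vertices and has exactly two sink vertices among the $b_i$), identifies those two sinks, and only \emph{then} chooses the second $K_{3,3,1}$ on $\{b_1,b_2,b_4,a_3,a_4,a_5,w\}$ --- a vertex set meeting the first pair only in its two sinks, with $b_4,a_4,a_5$ supplying the fresh vertices. With that choice the sources of $T_2,T_4$ are (or can be rerouted through $b_4$ to be) the sinks of $T_1,T_3$, their sinks land in $\{a_3,a_4,a_5\}$ disjoint from the sources $a_1,a_2$ of the first pair, and the edges from $a_3,a_4,a_5$ into $a_1,a_2$ close up the ring. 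To complete your argument you would need to replace your symmetric second $K_{3,3,1}$ with this adaptive choice (or supply a genuine case analysis showing every overlap pattern of your two pairs can still be assembled, which I do not believe is true as stated).
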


\begin{proof}

Within any embedding of $\overline{G}$ we can identify a copy of $\overline{D_4}$ that satisfies the conditions of Corollary \ref{D4_cor} as follows.
The vertices $a_1, a_2, a_3, b_1,b_2,b_3$ and $v$ with the edges directed from $A$ to $B$ form $K_{3,3,1}$.  We may find a triangle-square pair with odd linking number.  Suppose the triangle $T_1$ is $a_1, v, b_1$.  The square is $a_2,b_2, a_3, b_3$.  Using the edge $b_2b_3$ we find a triangle $T_3$ that has odd linking number with $T_1$.  $T_3$ contains $b_2, b_3$ and one of $a_2, a_3$.  Say $a_2$.  Note that $a_1$ and $a_2$ are sources. The vertex $b_1$ is a sink, and so is one of $b_2, b_3$.  Say it is $b_2$.

Now, $b_1, b_2, b_4, a_3, a_4, a_5$ and $w$ along with the edges directed from $B$ to $A$ form a $K_{3,3,1}$. (This does not depend on the choice of vertex labels in the previous paragraph).  We may similarly find two triangles $T_2$ and $T_4$ with odd linking number.

We now construct the $\overline{D_4}$. If the source vertices of $T_2$ and $T_4$ are $b_1$ and $b_2$, they are the sink vertices of $T_1$ and $T_3$ and we continue.   If $T_4$ has source vertex $b_4$ instead of $b_2$ we add the edge $b_2b_4$.  $T_2$ and $T_4$ have sink vertices in $\{a_3, a_4, a_5\}$.  These are disjoint from the source vertices of $T_1$ and $T_3$, ($a_1$ and $a_2$).   As each of $a_3, a_4, a_5$ has an edge directed to each of $a_1$ and $a_2$, we can add the edges needed to get the adjacencies required to complete the $\overline{D_4}$.

Note that the other vertices contained in $T_1, T_3$ ($v$ and $b_2$) are disjoint from $T_2, T_4$, and the other vertices of $T_2, T_4$ ($w$ and one of $a_3, a_4, a_5$) are disjoint from $T_1, T_3$.
\end{proof}

\section{Constructing a 3-linked directed graph}
\label{3_link_section}

We include this construction of an intrinsically 3-linked directed graph both to demonstrate key ideas for the 4-linked construction and to provide a bound for Question 5.5 of \cite{FHR}.  Question 5.5 asks for the minimal $n$ such that the symmetric complete digraph $\overline{DK_n}$ is intrinsically 3-linked.  The example below shows that $n \leq 21$. As $K_{9}$ can be embedded with no triple link \cite{flapan3}, and any embedding of $K_n$ with no 3-link can be extended to an embedding of $\overline{DK_n}$ with no 3-link (by embedding the symmetric edges so that they bound disks), we have $10 \leq n \leq 21$.

\begin{lemma}
Let $\overline{H}$ be $\overline{DK_{4,4}}$ with a single directed edge $b-a$ removed. Then the edge $a-b$ is contained in a consistently oriented link with non-zero linking number in every embedding of $\overline{H}$.  Further, that link is composed either of two 4-cycles, or a 4-cycle containing $a-b$ and a 2-cycle.
\label{graph_H_lma}
\end{lemma}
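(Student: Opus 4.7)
The plan is to combine the intrinsic linkedness of $K_{4,4}-e$ (a Petersen family graph) with an analysis of how the directed edge $a\to b$ can be routed into a consistently oriented link using the $2$-cycles of $\overline{H}$. The underlying simple graph of $\overline{H}$ is $K_{4,4}$, so the subgraph $K_{4,4}-e$ (where $e=\{a,b\}$) is intrinsically linked. Hence any embedding $f$ of $\overline{H}$ contains an odd linked pair of disjoint $4$-cycles in $K_{4,4}-e$; since this pair spans all $8$ vertices and avoids $e$, the endpoints $a$ and $b$ must lie on different cycles. This establishes that the underlying graph always carries a link, but not yet one through the edge $a\to b$.

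Next I would define the Sachs-type mod-$2$ invariant $\Omega_e(f)=\sum lk(L_1,L_2)^2\pmod 2$ summed over unordered disjoint $4$-cycle pairs $\{L_1,L_2\}$ in $K_{4,4}$ with $e\in L_1$. A crossing-change computation (counting, for each non-adjacent edge pair $(\alpha,\beta)$, the number of pairs $\{L_1,L_2\}$ with $e\in L_1$ and $\alpha,\beta$ in different cycles, and verifying this count is always even by direct enumeration in $K_{4,4}$) shows $\Omega_e$ is an embedding invariant. If $\Omega_e\equiv 1\pmod 2$, there exists a disjoint $4$-cycle pair with $e\in L_1$ and odd linking number. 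Because $L_1$ contains both $a$ and $b$ adjacently, its consistent orientation in $\overline{H}$ must use $a\to b$ (as $b\to a$ is absent); the cycle $L_2$ uses only vertices distinct from $a,b$ and is freely consistently orientable. This yields the two-$4$-cycle case of the lemma.

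If $\Omega_e\equiv 0\pmod 2$, I would introduce the mixed invariant $\Omega_e^{(2)}(f)=\sum lk(L,C)^2\pmod 2$ summed over pairs where $L$ is a $4$-cycle in $K_{4,4}$ containing $e$ and $C$ is a $2$-cycle in $\overline{H}$ disjoint from $L$. The identity $\Omega_e+\Omega_e^{(2)}\equiv 1\pmod 2$ would be established by comparing with the Sachs invariant of the $K_{3,3,1}$ minor obtained by H-cyclic subcontraction of a $2$-cycle $\{a_k,b_k\}$ with $k\ne 1$. The $K_{3,3,1}$ Sachs invariant equals $1$, and its odd-linked triangle-square pairs lift (through the contracted $2$-cycle) either to disjoint $4$-cycle pairs in $\overline{H}$ containing $e$ (contributing to $\Omega_e$) or to $4$-cycle plus $2$-cycle configurations containing $e$ (contributing to $\Omega_e^{(2)}$). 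Thus $\Omega_e^{(2)}\equiv 1$ in this case, giving a $4$-cycle through $a\to b$ together with a disjoint $2$-cycle having odd linking number, the second case of the lemma.

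The main obstacle is establishing the bijective correspondence between triangle-square pairs of the $K_{3,3,1}$ minor and the cycle configurations in $\overline{H}$ contributing to $\Omega_e$ and $\Omega_e^{(2)}$, together with the parity bookkeeping needed for $\Omega_e+\Omega_e^{(2)}\equiv 1$. Once the invariants are in hand, consistency of orientation is automatic: any $4$-cycle of $\overline{H}$ containing both $a$ and $b$ must use the surviving edge $a\to b$, and any $2$-cycle (bigon) of $\overline{H}$ is trivially consistently oriented, so the resulting link is consistently oriented in $\overline{H}$.
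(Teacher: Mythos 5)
There is a genuine gap, and it sits exactly where the lemma's second alternative (the $4$-cycle plus $2$-cycle link) actually comes from. In an embedding of $\overline{H}$ the two directed edges of a symmetric pair are embedded as two \emph{different} arcs, so a cycle of the underlying $K_{4,4}$ does not determine a closed curve until you choose a directed representative for each of its edges. Your invariant $\Omega_e$ is therefore computed for one such choice of representatives, and when you then pass to ``the consistent orientation of $L_2$'' you are replacing some arcs by their reverses, i.e.\ replacing the curve $L_2$ by a different curve $L_2'$. The linking numbers satisfy $lk(L_1,L_2') = lk(L_1,L_2) \pm \sum_i lk(L_1, B_i)$, where the $B_i$ are the bigons at the flipped edges, so consistency of orientation is \emph{not} ``automatic'': $L_2'$ may well have zero linking number with $L_1$. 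The paper's proof confronts exactly this point: it fixes a sub-digraph $\overline{H'}$ (one directed edge per undirected edge) chosen so that every $4$-cycle through $a\to b$ is already consistently oriented, applies Sachs' theorem (every edge of $K_{4,4}$ lies in a nonsplit pair of disjoint $4$-cycles in every embedding) to get $S_1\ni a\to b$ and a possibly inconsistent $S_2$, and then argues: either some bigon on adjacent vertices of $S_2$ has nonzero linking number with $S_1$ (giving the $4$-cycle/$2$-cycle link), or all such bigons link $S_1$ trivially and the edge flips needed to make $S_2$ consistent do not change $lk(S_1,\cdot)$. Your proposal needs this bigon-correction step in the $\Omega_e\equiv 1$ branch; without it the ``two $4$-cycle'' conclusion does not follow.

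Two secondary issues. First, the opening observation that $K_{4,4}-e$ is intrinsically linked produces a link \emph{avoiding} $e$, so it does not help locate a link through $a\to b$; what is needed is the edge version of Sachs' theorem for $K_{4,4}$, which the paper cites and which follows by contracting $e$ to obtain $K_{3,3,1}$. Second, the invariance of $\Omega_e$ and the identity $\Omega_e+\Omega_e^{(2)}\equiv 1$ are asserted rather than proved; moreover, for any fixed choice of directed representatives $\Omega_e$ is just the $K_{3,3,1}$ invariant pulled back through that contraction and hence is identically $1$, so your entire $\Omega_e\equiv 0$ branch is vacuous. The $2$-cycle alternative in the lemma does not arise from a vanishing invariant but from the reorientation correction described above.
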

\begin{proof}
Let $\overline{H'}$ be the subgraph of $\overline{H}$ shown in Figure \ref{graph_H}. The underlying graph of $H'$ is $K_{4,4}$ with the vertex partitions $\{b,c,e,g\}$, and $\{a,d,f,h\}$, and every edge between $\{d,f,h\}$ and $\{c,e,g\}$ directed towards $\{c,e,g\}$, every edge from $b$ to $\{d,f,h\}$ directed towards $\{d,f,h\}$, and every edge between $\{c,e,g\}$ and $a$ directed towards $a$, and finally the edge between $a$ and $b$ directed towards $b$. Sachs \cite{sachs} showed that given any edge of $K_{4,4}$ and any spatial embedding of $K_{4,4}$, that edge is contained in a non-split link. By construction of $\overline{H'}$, the edge $a-b$ is only contained in consistently oriented cycles, so in any embedding of $\overline{H}$, there must exist a consistently oriented 4-cycle $S_1$ containing $a-b$ that has non-zero linking number with an arbitrary 4-cycle $S_2$.

Fix an embedding of $\overline{H}$.  We may find cycles $S_1, S_2$ as above.  If $S_2$ is consistently oriented, we are done.   If $S_1$ has non-zero linking number with a 2-cycle $w-z, z-w$ for two adjacent vertices $w,z$ in $S_2$, then we have the result.  So we may assume that $S_1$ has zero linking number with all such 2-cycles.   In this case, we may form $S'_2$ from $S_2$ by replacing edge $z-w$ with $w-z$ and as $lk(S_1, z-w,w-z) = 0$, $lk(S_1, S_2) = lk(S_1,S'_2)$.  Thus, if $S_2$ is inconsistently oriented, we may form a consistently oriented 4-cycle $S'_2$ by replacing one or more directed edges of $S_2$ with edges of opposite orientation, and as $lk(S_1, S'_2) = lk(S_1,S_2) \neq 0$, we have the result.

\end{proof}

\begin{figure}
\includegraphics[scale=.25]{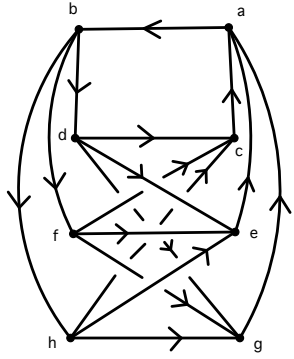}
\caption{The digraph $\overline{H'}$.}
\label{graph_H}
\end{figure}

\begin{theorem}
Let $\overline{H}$ be as above, and $\overline{G}$ the graph formed from 3 copies of $\overline{H}$ by identifying the end points of the preferred edges $a-b$ so that they form a consistently oriented triangle.  Then every embedding of $\overline{G}$ contains a non-split consistently oriented 3-link.
\end{theorem}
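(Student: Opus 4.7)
The plan is to apply Lemma \ref{graph_H_lma} to each of the three copies of $\overline{H}$ inside $\overline{G}$, then combine the resulting linked pairs by using the triangle as a mediator to produce a non-split consistently oriented 3-link. Fix an embedding $f$ of $\overline{G}$, and label the copies $\overline{H}_1, \overline{H}_2, \overline{H}_3$ so that the preferred edge of $\overline{H}_i$ is $e_i = v_i \to v_{i+1}$ (indices mod~$3$), making $T = e_1 + e_2 + e_3$ the consistently oriented triangle. Applying Lemma \ref{graph_H_lma} in each $\overline{H}_i$ produces a consistently oriented link $(S_1^i, S_2^i)$ with $lk(S_1^i, S_2^i) \equiv 1 \pmod 2$, where $S_1^i$ is a $4$-cycle containing $e_i$ and $S_2^i$ lies entirely on the six internal (non-triangle) vertices of $\overline{H}_i$. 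Since distinct copies of $\overline{H}$ share only the triangle vertices, the three cycles $S_2^1, S_2^2, S_2^3$ are pairwise vertex-disjoint and each is disjoint from $T$.

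Decomposing $S_1^i = e_i + \alpha_i$, where $\alpha_i$ is a consistent length-$3$ path from $v_{i+1}$ to $v_i$ through $\overline{H}_i$, the concatenation $T' = \alpha_1 \cup \alpha_2 \cup \alpha_3$ is a consistent $9$-cycle in $\overline{G}$ (traversing the triangle vertices in the opposite cyclic order), and it is also disjoint from every $S_2^j$ because the two internal vertices used by $\alpha_j$ are complementary to the four used by $S_2^j$. As $1$-chains, $\sum_i S_1^i = T + T'$, so bilinearity of the mod-$2$ linking number yields, for each $j$,
\[
lk(T, S_2^j) + lk(T', S_2^j) \equiv \sum_i lk(S_1^i, S_2^j) \pmod{2},
\]
with $lk(S_1^j, S_2^j) \equiv 1$ on the right. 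The key step is a pigeonhole argument showing that for some mediator $C \in \{T, T'\}$, there exist distinct indices $i \ne j$ with $lk(C, S_2^i) \equiv lk(C, S_2^j) \equiv 1$; the pairwise vertex-disjoint consistently oriented cycles $(C, S_2^i, S_2^j)$ then form a non-split 3-link, since any separating $2$-sphere would isolate one component from the other two and thereby annihilate one of the two odd pairwise linkings.

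The main obstacle is controlling the off-diagonal terms $lk(S_1^k, S_2^j)$ for $k \ne j$: these depend on how copies $\overline{H}_k$ and $\overline{H}_j$ are entangled around their shared triangle vertex and are not directly bounded by Lemma \ref{graph_H_lma}. When these terms obstruct the pigeonhole, the resolution is to enlarge the family of candidate mediator cycles by exploiting additional consistent cycles available through the $\overline{DK_{4,4}}$ structure of each $\overline{H}_k$, such as length-$5$ hybrids of the form $e_i + e_{i+1} + \bar{\alpha}_{i+2}$, where $\bar{\alpha}_{i+2}$ is a consistent path from $v_{i+2}$ to $v_i$ through $\overline{H}_{i+2}$ (the reverse-direction partner of $\alpha_{i+2}$, which exists because only a single directed edge is removed from each $\overline{DK_{4,4}}$). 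These hybrids satisfy further mod-$2$ linking identities that, taken together with the displayed identity above, collectively deliver the required pair of odd linkings with some consistent mediator cycle in every case.
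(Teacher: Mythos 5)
Your setup is sound and follows the same strategy as the paper up to the final step: the chain-level decomposition $\sum_i S_1^i = T + T'$, the disjointness checks, and the observation that a consistent mediator cycle oddly linking two of the $S_2^j$ yields a non-split 3-link are all correct. The gap is in how you handle the off-diagonal terms $lk(S_1^k, S_2^j)$ for $k\neq j$. You identify these as ``the main obstacle'' and propose to defeat them with length-5 hybrid cycles satisfying unstated ``further identities'' that you assert ``collectively deliver the required pair of odd linkings \ldots in every case.'' That assertion is never proved, and it is far from clear that the hybrids you describe can be arranged to be disjoint from the relevant $S_2^j$ and to satisfy identities that close the pigeonhole. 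As written, the proof does not terminate.

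The irony is that the off-diagonal terms are not an obstacle at all --- they are themselves a source of 3-links, so the fix is one sentence. Each $S_1^k$ is a consistently oriented cycle disjoint from every $S_2^j$ (for $j\neq k$ because $S_2^j$ lies on the interior vertices of $\overline{H}_j$, and for $j=k$ because they form a link by Lemma \ref{graph_H_lma}). Hence if $lk(S_1^k, S_2^j)\neq 0$ for some $j\neq k$, then $(S_1^k, S_2^k, S_2^j)$ is already a non-split 3-link with mediator $S_1^k$, and you are done. Otherwise all off-diagonal terms vanish, your displayed identity gives $lk(T,S_2^j)+lk(T',S_2^j)\equiv 1 \pmod 2$ for every $j$, and your two-mediator pigeonhole applies. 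Equivalently --- and this is how the paper phrases it --- the abstractly planar complex $T\cup\alpha_1\cup\alpha_2\cup\alpha_3$ divides $S^2$ into five regions bounded by the consistent cycles $S_1^1, S_1^2, S_1^3, T, T'$; each $S_2^j$ must have non-zero linking number with at least two of these five, and $3\cdot 2 = 6 > 5$ forces two of the $S_2^j$ to share a mediator. No hybrid cycles are needed.
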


\begin{proof}

The graph $\overline{G}$ is formed from three copies of $\overline{H}$, label them $\overline{H_i}$.  Embed $\overline{G}$.  Let $A_i$ be the cycles containing $a_i$ and $b_i$ in the non-split, consistently oriented link in $\overline{H_i}$.   Let $C_i$ be the cycle in $\overline{H_i}$ that has non-zero linking number with $A_i$. See Figure \ref{3construction}.  Let $Z$ be the consistent 3-cycle formed from the edges $a_i-b_i$, and let $W$ be the consistent 9-cycle formed from the other edges of the $A_i$.   (Z is the ``inside" of the ring, W is the ``outside").  Then $A_i, Z, W$ are a division of $S^2$ into 5 consistent cycles, and hence each of the $C_i$ must have non-zero linking number with at least two of these cycles.  By the pigeonhole principle, two of the $C_i$ have non-zero linking number with the same cycle, giving a 3-link.

\end{proof}

\begin{figure}
\includegraphics[scale=.75]{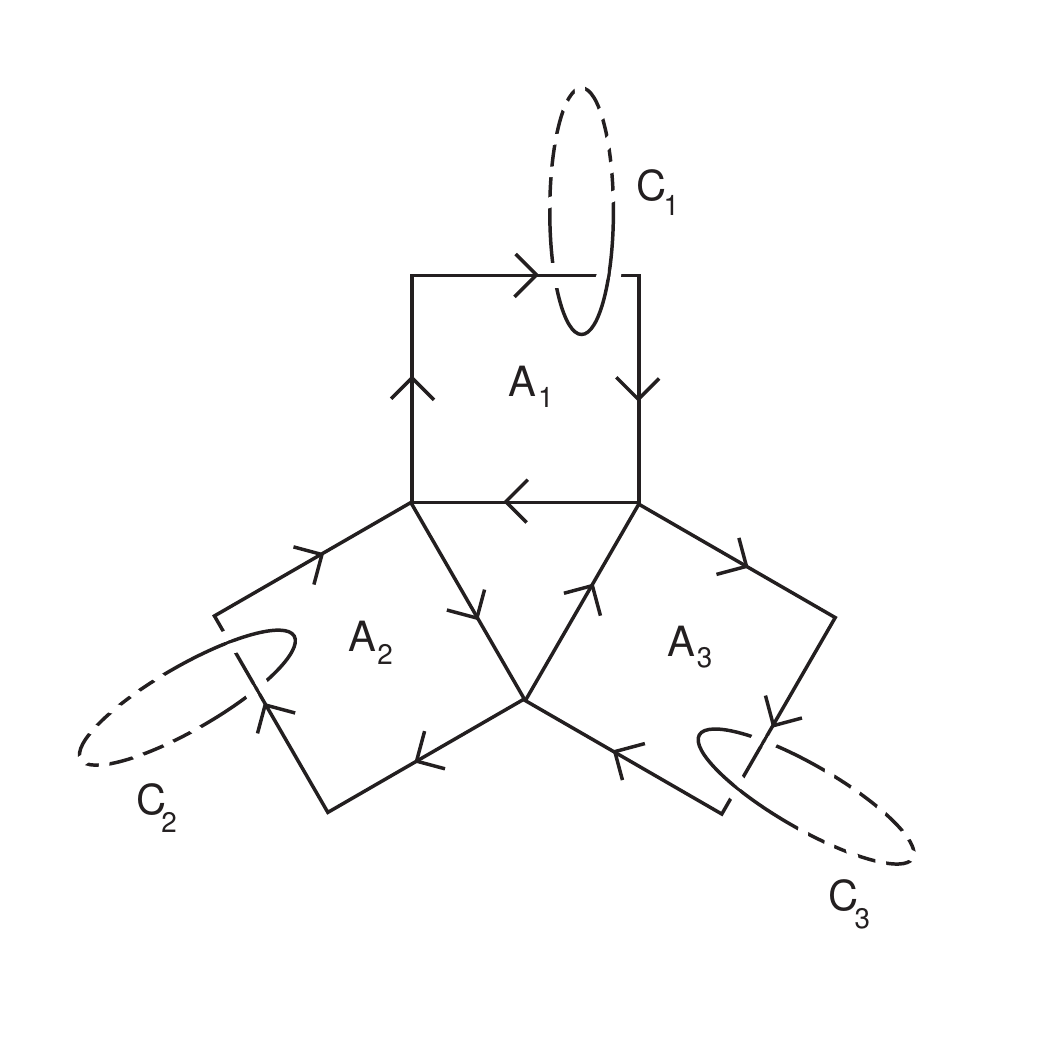}
\caption{Gluing three copies of $\overline{H}$.}
\label{3construction}
\end{figure}

\section{Constructing a 4-linked directed graph}
\label{4_link_section}

Let $\overline{H}$ be the digraph in Figure \ref{graph_H}.  We will use the digraph $\overline{H}$ as the building block for the construction of the 4-linked directed graph, and Lemma \ref{graph_H_lma} will be a key tool in the proof.

\begin{lemma} Let $\overline{G}$ be the digraph formed from 11 copies of $\overline{H}$, by identifying $a_1$ with $b_2$, $a_2$ with $b_3$ and so on to $a_{11}$ identified to $b_1$.  Then every embedding of $\overline{G}$ contains either a consistently oriented non-split 4-link or a set of three non-split 3-links that have no common cycle.
\label{3linklemma}
\end{lemma}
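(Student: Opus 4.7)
The plan is to apply Lemma \ref{graph_H_lma} separately to each of the eleven copies of $\overline{H}$, exactly as in the three-copy argument of Section \ref{3_link_section}, and then use a global linking identity combined with a combinatorial pigeonhole to produce either a $4$-link or three $3$-links that share no common cycle. Label the copies $\overline{H_i}$ for $i = 1, \ldots, 11$, with preferred edges $a_ib_i$ identified cyclically so that $b_i = a_{i+1}$ (indices mod $11$). For a fixed embedding, Lemma \ref{graph_H_lma} gives inside each $\overline{H_i}$ a consistently oriented $4$-cycle $A_i$ containing the edge $a_ib_i$, together with a partner cycle $C_i$ (a $4$-cycle or a $2$-cycle in $\overline{H_i}$) such that $lk(A_i, C_i) \neq 0$. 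Let $Z$ denote the consistently oriented $11$-cycle on the identified vertices formed by the edges $a_ib_i$, and let $W$ denote the consistently oriented $33$-cycle obtained by concatenating around the ring the three-edge consistently oriented paths from $b_i$ to $a_i$ lying inside $A_i \setminus \{a_ib_i\}$. With the natural orientations one verifies the relation $\sum_{i=1}^{11} A_i = Z + W$ as directed $1$-cycles, so bilinearity of the linking number yields
\[
    lk(C, Z) + lk(C, W) = \sum_{j=1}^{11} lk(C, A_j)
\]
for any cycle $C$ disjoint from $Z \cup W \cup \bigcup_j A_j$.

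Applied to $C = C_i$ this identity gives the key consequence: since $lk(C_i, A_i) \neq 0$, some other cycle in the $13$-element family $\mathcal{F} = \{A_1, \ldots, A_{11}, Z, W\}$ must have nonzero linking number with $C_i$. Thus each $C_i$ links nontrivially with at least two members of $\mathcal{F}$. Record these linkings as edges of a bipartite graph $H$ between $\{C_1, \ldots, C_{11}\}$ and $\mathcal{F}$; then $|E(H)| \geq 22$. If some $F \in \mathcal{F}$ has degree $\geq 3$ in $H$, picking three of its $C$-neighbors and checking pairwise vertex-disjointness gives a consistently oriented non-split $4$-link. Otherwise every $F$ has $H$-degree at most $2$; since $|\mathcal{F}| = 13$ and $|E(H)| \geq 22$, at least nine members of $\mathcal{F}$ have degree exactly two, and each such $F$ together with its two $C$-neighbors is a non-split $3$-link.

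To find three such $3$-links sharing no common cycle, I encode the degree-$2$ elements of $\mathcal{F}$ as edges of a multigraph $M$ on $\{C_1, \ldots, C_{11}\}$: the goal is to find three distinct edges of $M$ with empty common intersection. A Helly-type observation on pairwise-intersecting families of $2$-sets shows the only way this can fail is if $M$ is a multi-star centered at some $C^*$. In that exceptional case $C^*$ has $H$-degree at least nine, so at least seven of its $\mathcal{F}$-neighbors are $A$-cycles. A short index-counting argument on the eleven-cycle of indices (removing the two indices adjacent to the index $i_0$ of $C^* = C_{i_0}$ still leaves a set in which any seven chosen indices contain three that are pairwise non-adjacent) then produces three $A_{j_k}$'s pairwise vertex-disjoint and disjoint from $C^*$, so that $\{C^*, A_{j_1}, A_{j_2}, A_{j_3}\}$ is a consistently oriented $4$-link, closing the argument.

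The step I expect to be the main obstacle is the disjointness bookkeeping throughout. The cycles $Z$, $W$, and the $A_i$'s share many vertices with one another ($A_i$ meets $Z$ in two vertices, meets $W$ in all four of its vertices, and meets $A_{i \pm 1}$ in one vertex), and the partner cycles $C_i$ from Lemma \ref{graph_H_lma} may themselves pass through the identified vertices $a_i = b_{i-1}$, so ``link'' must be interpreted as requiring vertex-disjoint components. Making the above argument rigorous will require exploiting the flexibility inside each $\overline{H_i}$ to refine the choice of $C_i$ (and, if needed, $A_i$) so as to avoid the problematic shared vertices, and using the abundance of eleven indices around the ring to separate the final chosen components. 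Once disjointness is controlled, the homological identity and the pigeonhole steps above deliver the conclusion.
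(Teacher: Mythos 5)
Your proof is correct, and its first half is essentially the paper's: the same cycles $A_i$, $C_i$, $Z$, $W$, and the same observation (which the paper phrases as ``$A_i, Z, W$ divide $S^2$ into 13 consistent cycles'' and you phrase as the $1$-cycle identity $\sum_i A_i = Z + W$) forcing each $C_i$ to link at least two members of $\mathcal{F}$. Where you diverge is the combinatorial endgame. The paper caps the $H$-degree of $Z$ and of $W$ at $2$ and the number of ``foreign'' $C_j$ linking each $A_i$ at $1$ (each cap enforced by the no-$4$-link hypothesis), concludes that at least $7$ of the $C_i$ link some $A_j$ with $j \neq i$, and then greedily extracts three $3$-links each of the form $A_j, C_j, C_k$ with all nine components distinct. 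You instead run a degree count on the bipartite incidence graph, split on whether some member of $\mathcal{F}$ has degree $\ge 3$ (immediate $4$-link), and otherwise apply a Helly-type argument to the $\ge 9$ degree-two members viewed as edges of a multigraph on $\{C_1,\dots,C_{11}\}$, with the multi-star exception handled by an independent-set argument in the $11$-cycle of indices. All of these steps check out (the disjointness worries you flag at the end are actually unproblematic: each $C_i$ avoids $a_i$ and $b_i$, hence is disjoint from $Z$, $W$, every $A_j$, and every $C_j$ with $j\neq i$, so no refinement of the choices from Lemma \ref{graph_H_lma} is needed). The trade-off is that your argument proves the lemma exactly as stated but yields a weaker structural output: your three $3$-links may be centered at $Z$ or $W$ rather than at three distinct $A_i$'s, and two of them may share a $C$-component; they merely have empty \emph{common} intersection. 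The paper's greedy version delivers nine pairwise distinct components with each $3$-link centered on an $A_i$ containing a preferred edge, and it is this stronger form that the proof of the subsequent $4$-link theorem actually invokes when it relabels the links as $A_2, C_{21}, C_{22}$, etc. So your proof suffices for the lemma but would require the downstream argument to be adjusted, or your endgame to be supplemented, to recover the form the application uses.
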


\begin{proof}
Embed $\overline{G}$, and assume that the embedding does not contain a consistently oriented non-split 4-link.   Let $A_i$ be the cycles containing $a_i$ and $b_i$ in the non-split, consistently oriented link in $\overline{H_i}$, and let $C_i$ be the other components in those links.   Let $Z$ be the consistent 11-cycle formed from the edges $a_i-b_i$, and let $W$ be the consistent 33-cycle formed from the other edges of the $A_i$.   (Z is the ``inside" of the ring, W is the ``outside").  Then $A_i, Z, W$ are a division of $S^2$ into 13 consistent cycles, and thus each of the 11 $C_i$ must have non-zero linking number with at least 2 of these cycles.

As there is no 4-link, at most 2 of the $C_i$ can have non-zero linking number with $Z$, and at most 2 can have non-zero linking number with $W$. Thus at least 7 of the $C_i$ must have non-zero linking number with $A_j$ where $i\neq j$.  As there is no 4-link, for each $A_i$ there is at most one $C_j$, $i \neq j$ that has non-zero linking number with $A_i$.   Thus the set $A = \{ A_i | lk(A_i, C_j) \neq 0$ for $i \neq j \}$ has at least 7 members.  Choose a member of $A$, call it $A_1$.   It has non-zero linking number with $C_1$ from $\overline{H_1}$ and a second $C_j$, call it $C_2$.   Remove $C_{1}$ and $C_{2}$ from consideration.  This leaves at least 5 $C_i$ that have non-zero linking number with $A_j$ where $i\neq j$.  Thus $A$ must have at least 5 members remaining, and we may choose $A_3$ in $A$, with $A_3 \neq A_2$.  $A_3$ has non-zero linking number with $C_3$ from $\overline{H_3}$ and an additional cycle $C_4$.  As before, removing $C_3$ and $C_4$ from consideration leaves at least 3 of the $C_i$ with non-zero linking number with $A_j$ where $i\neq j$.  Thus $A$ must have at least 3 members, and we may choose $A_5 \in A$ with $A_5 \neq A_2, A_4$. The cycle $A_5$ has non-zero linking number with $C_5$ and $C_6$. So we have three 3-links $A_1$, $C_{1}$, $C_{2}$, $A_3$, $C_{3}$, $C_{4}$, and $A_5$, $C_{5}$, $C_{6}$,  with all of the $C_{j}$, $A_i$ distinct (though the $A_i$ may share vertices).

\end{proof}

\begin{theorem} Let $\overline{H}$ be as above, and $\overline{G}$ the digraph formed from 2 ${ 11 \choose 2}$ - 11 copies of $\overline{H}$ by identifying the $a_i$ and $b_j$ such that the edges $a_i b_i$ form $\overline{DK_{11}} \setminus B$, where $B$ is a consistently oriented Hamiltonian cycle.  Then $\overline{G}$ contains a non-split consistently oriented 4 component link in any spatial embedding.
\end{theorem}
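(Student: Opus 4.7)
The plan is to argue by contradiction: assume some embedding of $\overline{G}$ contains no non-split consistently oriented 4-component link, and derive a contradiction by applying Lemma~\ref{3linklemma} across the many directed Hamiltonian cycles of $\overline{DK_{11}}\setminus B$. First, I will index the $2\binom{11}{2}-11 = 99$ copies of $\overline{H}$ by the directed edges of $\overline{DK_{11}}\setminus B$, writing $\overline{H_e}$ for the copy whose preferred edge is $e$, and invoke Lemma~\ref{graph_H_lma} in each $\overline{H_e}$ to obtain a non-split consistent 2-link $\{A_e, C_e\}$ with $A_e$ a 4-cycle through the preferred edge. The no-4-link hypothesis then forces each $A_e$ to satisfy $\mathrm{lk}(A_e, C_f)\neq 0$ for at most one $f\neq e$: otherwise $\{A_e, C_e, C_{f_1}, C_{f_2}\}$ would itself be a non-split 4-link, since $A_e$ is nontrivially linked to each of the other three components. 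Let $\phi$ be the partial function on edges recording this unique partner when it exists; the symmetric argument on the $C$ side makes $\phi$ a partial injection, so $|\mathrm{dom}(\phi)|\leq 99$.

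For every directed Hamiltonian cycle $H$ of $\overline{DK_{11}}\setminus B$, the 11 copies $\{\overline{H_e}: e\in H\}$ are glued exactly as in the hypothesis of Lemma~\ref{3linklemma}. The no-4-link hypothesis forces that lemma's ``three 3-links'' conclusion; rerunning the pigeonhole step of its proof (at most two of the eleven $C_i$'s in $H$ can link $Z_H$, and at most two can link $W_H$, lest a 4-link appear) yields the stronger conclusion that at least seven of the eleven edges $e\in H$ satisfy $\phi(e)\in H$.

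The final step is a double-counting argument on pairs $(H,e)$ with $H$ a directed Hamiltonian cycle of $\overline{DK_{11}}\setminus B$ and $e\in H$ satisfying $\phi(e)\in H$. Writing $\mathcal{H}$ for the set of such cycles and $N(e,f)$ for the number of them containing both $e$ and $f$, this gives
\begin{equation*}
7\,|\mathcal{H}|\;\le\;\sum_{e\in\mathrm{dom}(\phi)}N(e,\phi(e))\;\le\;99\cdot 8!,
\end{equation*}
using the bound $N(e,f)\leq 8!$, which follows in $\overline{DK_{11}}$ by collapsing the compatible directed pair into a block and cyclically arranging the remaining nine entities (and can only decrease when one further requires the cycle to avoid $B$). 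Since any subset $S\subseteq B$ of $k\leq 10$ edges is a disjoint union of subpaths of $B$ and extends to exactly $(10-k)!$ Hamiltonian cycles of $\overline{DK_{11}}$, a truncated Bonferroni sum on the inclusion-exclusion expansion of $|\mathcal{H}|$, carried out through the $k=6$ term, produces $|\mathcal{H}|\geq 10! - 11\cdot 9! + 55\cdot 8! - 165\cdot 7! + 330\cdot 6! - 462\cdot 5! + 462\cdot 4! \;>\; 99\cdot 8!/7$, yielding the contradiction.

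The cleanest parts are the pairwise constraint giving $|\mathrm{dom}(\phi)|\leq 99$ and the packaging of the per-Hamiltonian-cycle information through Lemma~\ref{3linklemma}; the main obstacle I foresee is the final numerical estimate. One must verify that the truncated Bonferroni lower bound, carried to a small even cutoff, already beats $99\cdot 8!/7\approx 14.14\cdot 8!$. The slack is generous (the exact value of $|\mathcal{H}|$ is close to $10!/e\approx 30.2\cdot 8!$), so the estimate goes through, but making it rigorous requires the explicit inclusion-exclusion bookkeeping above, along with a careful check that every subset of $B$'s edges really does contribute the uniform factor $(10-k)!$ to the count.
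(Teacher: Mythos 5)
Your argument is correct in its essentials but takes a genuinely different route from the paper's. The paper applies Lemma \ref{3linklemma} once, to the single ring of eleven copies of $\overline{H}$ whose preferred edges form the reversal of $B$, extracts three 3-links, and then builds a second, smaller ring by bridging the three $A$-components with three further copies of $\overline{H}$, finishing with the pigeonhole count $2\cdot 9/8>2$ (plus a case analysis when the $A$'s share vertices). You instead rerun the pigeonhole step of Lemma \ref{3linklemma} simultaneously over \emph{every} consistently oriented Hamiltonian cycle of $\overline{DK_{11}}\setminus B$ and close with a global double count: each such cycle must carry at least seven edges $e$ with $\phi(e)$ also on the cycle, while the at most $99$ pairs $(e,\phi(e))$ each lie on at most $8!$ common Hamiltonian cycles, and $99\cdot 8!/7=570240$ falls well short of $|\mathcal{H}|$. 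This is heavier on enumeration but avoids the paper's second geometric construction entirely, in particular the bridging copies and the shared-vertex cases.

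Two small repairs are needed. First, your Bonferroni truncation has the wrong parity: stopping after the $k=6$ term gives an \emph{upper} bound on $|\mathcal{H}|$, not a lower bound; you must stop after an odd term, and already $k=3$ gives $|\mathcal{H}|\ge 10!-11\cdot 9!+55\cdot 8!-165\cdot 7!=1023120$, which is ample. Second, the asserted injectivity of $\phi$ does not follow from ``the symmetric argument on the $C$ side'': three components $A_{e_1},A_{e_2},A_f$ all linked with a single $C_f$ need not form a 4-link, since distinct $A$-cycles can share identification vertices. Fortunately your count never uses injectivity, only that $\phi$ is a well-defined partial function on at most $99$ edges; that much is established by the star-shaped 4-link $\{A_e,C_e,C_{f_1},C_{f_2}\}$, whose components are pairwise disjoint because each $C$-cycle avoids the identification vertices of its own copy of $\overline{H}$.
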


\begin{proof}
Embed $\overline{G}$, and let $\overline{G'}$ denote the subgraph of $\overline{G}$ made up of the 11 copies of $\overline{H}$ whose edges $a_i - b_i$ make up the unique consistently oriented Hamiltonian cycle of $\overline{G}$ that corresponds to the missing Hamiltonian cycle $B$ (with opposite orientation). If $\overline{G'}$ does not contain a 4-link, then by Lemma \ref{3linklemma}, it must contain three distinct 3-links.   Renumbering the $\overline{H_i}$ as necessary, we may call the components of these links $A_2, C_{21}, C_{22}$; $A_4, C_{41}, C_{42}$, and $A_6, C_{61}, C_{62}$, where $A_i$ is the component containing the edge $a_i-b_i$, and $C_{ij}$ the cycles that have non-zero linking number with $A_i$.

If $A_2$, $A_4$ and $A_6$ do not share any vertices, we use the copies of $\overline{H}$ that make up the rest of $\overline{G}$ to form a new subgraph $\overline{G''}$ of $\overline{G}$ as follows.  Let $\overline{H_1}$ be the copy of $\overline{H}$ where $a_1$ is identified with $b_6$ and $b_1$ with $a_2$.  $\overline{H_3}$ is the graph with $a_3$ identified with $b_2$ and $b_3$ with $a_4$, and similarly for $\overline{H_5}$.   Call the components of the 2-links in $\overline{H_i}$: $A_i$ and $C_{i1}$ (for $i$ odd).

Then we may form a consistent 6-cycle $Z$ from the edges $a_i-b_i$ of $\overline{G''}$, and an 18-cycle $W$ from the other edges of the $A_i$.  The cycles $A_i$, $Z$ and $W$ divide $S^2$ into consistently oriented regions, so each $C_{ij}$ must have non-zero linking number with at least 2 cycles in $A = \{A_i, Z, W\}$.  There are 9 $C_{ij}$ and 8 cycles in $A$.   Each $C_{ij}$ must have non-zero linking number with at least 2 cycles in $A$, and as $\frac{2*9}{8} > 2$, some cycle in $A$ must have non-zero linking number with at least 3 of the $C_{ij}$.   This gives the 4-link. (Alternately, we can apply Lemma \ref{genlemma} below, and as 6 $>$ 2(3-1), $\overline{G''}$ must contain a 4-link.)

If the some of $A_2$, $A_4$, $A_6$ share a vertex, we may repeat the above argument omitting one (or more) of the ``bridging" copies of $\overline{H}$ in the construction of $\overline{G''}$.
\end{proof}

\section{Towards an n-linked directed graph}

We expect the construction of an intrinsically $n$-linked directed graph for $n>4$ to be possible, and include the following lemmas.   Each may serve as the final step in an inductive argument constructing such a graph.

\begin{lma}
If $c_1$ is a consistently oriented cycle in $\overline{DG}$ and $c_1$ has non-zero linking number with $n$ disjoint arbitrary cycles, then $c_1$ has non-zero linking number with $n$ disjoint consistently oriented cycles.
\end{lma}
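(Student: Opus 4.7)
The plan is to adapt the edge-flipping trick used in the second paragraph of the proof of Lemma \ref{graph_H_lma} and apply it cycle by cycle. Since $\overline{DG}$ is symmetric, every directed edge $v \to w$ is accompanied by its reverse $w \to v$, so the consistently oriented 2-cycle $B_{vw} = (v \to w \to v)$ lives in $\overline{DG}$. The governing identity is that if $c'$ is obtained from a cycle $c$ by swapping the directed edge $v \to w$ for its reverse $w \to v$ while keeping the underlying vertex sequence, then $c - c'$ is homologous in $S^3 \setminus c_1$ to $\pm B_{vw}$, so that $lk(c_1, c) - lk(c_1, c') = \pm lk(c_1, B_{vw})$.

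Let $c^{(1)}, \dots, c^{(n)}$ be the given pairwise disjoint cycles with $lk(c_1, c^{(i)}) \neq 0$. Process each independently. If $c^{(i)}$ is already consistently oriented, leave it. Otherwise, examine the 2-cycles $B_{vw}$ built from each consecutive pair of vertices along $c^{(i)}$. If some such $B_{vw}$ has $lk(c_1, B_{vw}) \neq 0$, replace $c^{(i)}$ by $B_{vw}$; this is itself a consistently oriented cycle with the required linking number. Otherwise, every such $B_{vw}$ has linking number zero with $c_1$, in which case we may pick a cyclic direction around $c^{(i)}$ and flip each backward-pointing edge to its reverse; by the displayed identity each individual flip preserves $lk(c_1, \cdot)$, so the resulting consistently oriented cycle still has nonzero linking number with $c_1$.

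What remains is to check disjointness of the replacements. In both cases the new cycle uses only vertices already belonging to $c^{(i)}$ (two of them in the 2-cycle case, all of them in the flipping case), so the new cycle is vertex-disjoint from each $c^{(j)}$ with $j \neq i$. Because edges in an embedded graph meet only at shared endpoints, vertex-disjointness implies geometric disjointness, and the same argument shows the new cycle remains disjoint from $c_1$. The main obstacle I anticipate is the careful bookkeeping in the flipping case: one must verify that, after several successive edge swaps, the result is still a genuine simple closed curve in the embedding and that the linking-number contributions telescope to zero, but this follows from iterating the single-flip computation exactly as in the proof of Lemma \ref{graph_H_lma}.
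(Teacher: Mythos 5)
Your proposal is correct and follows essentially the same route as the paper: the same dichotomy between finding a bigon $B_{vw}$ with nonzero linking number with $c_1$ (which then serves as the replacement component) and flipping every inconsistent edge when all such bigons have linking number zero, with the replacements staying supported on the vertices of the original component so disjointness is preserved. The paper's proof is just a terser version of the same argument.
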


\begin{proof}
If $lk(c_1,c_j) \neq 0$ then either $lk(c_1, \overline{v_iv_j} ~ \overline{v_jv_i}) \neq 0$ for some adjacent vertices $v_i, v_j$, in $c_j$, or $c_j$ can be modified to a consistent cycle $c'_j$ by replacing any inconsistent edge $\overline{v_iv_j}$ with the consistent edge $\overline{v_jv_i}$ without changing $lk(c_1, c_j)$.  As only the cycle $c_j$ is modified, $lk(c_1,c_i)$ is unaffected for $i \neq j$.
\end{proof}

\begin{lemma}
Let $\overline{H}$ be a digraph such that in every embedding of $\overline{H}$, a fixed edge is contained a consistent cycle that has non-zero linking number with $n-1$ disjoint, consistently oriented cycles of $\overline{H}$. Let $\overline{H'}$ be a digraph for which every embedding contains a 2 link with non-zero linking number, and that link uses a fixed edge. Let $k$ be even, and $\overline{G}$ a graph formed from $\frac{k}{2}$ copies of $\overline{H}$ and $\frac{k}{2}$ copies of $\overline{H'}$ so that the preferred edges form a consistent $k$ cycle.  Then every embedding of $\overline{G}$ contains a consistently oriented non-split $n+1$ link when $k > 2(n-1)$.
\label{genlemma}
\end{lemma}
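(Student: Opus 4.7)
The plan is to emulate the strategies of the 3-link and 4-link theorems, replacing the specific counts by a general pigeonhole bound driven by the hypothesis $k>2(n-1)$.

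Fix a spatial embedding of $\overline{G}$ and suppose, for contradiction, that no consistently oriented non-split $(n+1)$-link appears. For each of the $k/2$ copies of $\overline{H}$ in the cyclic gluing, I would apply the hypothesis on $\overline{H}$ to extract a consistently oriented cycle $A_i$ through the preferred edge $a_ib_i$ together with $n-1$ pairwise disjoint, consistently oriented cycles $C_{i,1},\dots,C_{i,n-1}$, each having non-zero linking number with $A_i$. For each of the $k/2$ copies of $\overline{H'}$, apply the hypothesis on $\overline{H'}$ to obtain a consistent cycle $A_i$ through its preferred edge and a single consistent cycle $C_{i,1}$ with $lk(A_i,C_{i,1})\neq 0$.

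Next, let $Z$ denote the consistently oriented $k$-cycle formed by the preferred edges (guaranteed by the gluing hypothesis), and let $W$ be the 1-cycle obtained by summing the complementary arcs $A_i\setminus\{a_ib_i\}$. As in the 3-link construction, the endpoints of these arcs match up under the cyclic gluing to produce a single consistently oriented cycle $W$, disjoint from $Z$ except at the glued vertices. The homological identity $Z+W=\sum_i A_i$ then holds as 1-chains, so for every cycle $C$ in the embedding
\[
lk(C,Z)+lk(C,W)=\sum_{i=1}^{k}lk(C,A_i).
\]
Consequently, any cycle that has non-zero linking number with at least one element of $\mathcal{A}:=\{A_1,\dots,A_k,Z,W\}$ must in fact have non-zero linking number with at least two elements of $\mathcal{A}$.

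A counting argument then closes the proof. The total number of $C_{ij}$ cycles is $\tfrac{k}{2}(n-1)+\tfrac{k}{2}=\tfrac{kn}{2}$, and $|\mathcal{A}|=k+2$. Each $C_{ij}$ links its companion $A_i$ non-trivially and therefore contributes at least $2$ to the tally $\sum_{X\in\mathcal{A}}\#\{(i,j):lk(X,C_{ij})\neq 0\}$, which is thus at least $kn$. If no $X\in\mathcal{A}$ linked at least $n$ of the $C_{ij}$, the tally would be at most $(k+2)(n-1)$. But $k>2(n-1)$ rearranges to $kn>(k+2)(n-1)$, a contradiction. So some $X\in\mathcal{A}$ has non-zero linking number with $n$ distinct $C_{ij}$'s; together with $X$ these form the desired consistently oriented non-split $(n+1)$-link, using that the $C_{ij}$ lie in the pairwise disjoint interiors of their respective copies of $\overline{H}$ and $\overline{H'}$ and that non-splitness follows from $X$ linking each of the other $n$ components.

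The main obstacle I anticipate is making the homological identity $Z+W=\sum_i A_i$ and the concatenation of the complementary arcs into a single consistent cycle $W$ fully rigorous, particularly because $\overline{H}$ and $\overline{H'}$ may be combinatorially distinct and so the $A_i$'s may have varying lengths. A subsidiary technical point is ensuring the $C_{ij}$'s produced by the hypotheses are genuinely consistently oriented and pairwise disjoint; if the hypotheses deliver cycles that need adjustment, the preceding lemma on replacing inconsistent edges by consistent ones without changing linking numbers can be applied to clean them up without affecting any of the counts used above.
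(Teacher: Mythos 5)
Your proposal is correct and follows essentially the same route as the paper: extract the $A_i$ and $C_{ij}$ from each copy, form the consistent cycles $Z$ and $W$, observe that the relation $Z+W=\sum_i A_i$ forces each $C_{ij}$ to link at least two of the $k+2$ cycles in $\{A_1,\dots,A_k,Z,W\}$, and then pigeonhole using $kn>(k+2)(n-1)\iff k>2(n-1)$. Your homological phrasing of the "division of $S^2$ into regions" step and the explicit check of disjointness and non-splitness are just more careful renderings of what the paper asserts.
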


\begin{proof}
Let the components of the links in $\overline{H}$ and $\overline{H'}$ that use the preferred edge be denoted as $A_i$, and the other components of those links as $C_{ij}$.  Let $Z$ be the consistently oriented $k$ cycle formed by the preferred edges, and $W$ the (consistent) cycle formed by the other edges of the $A_i$.  Then $A_i$, $Z$, $W$ divide $S^2$ into $k+2$ consistently oriented regions.

There are $\frac{k}{2} n$ cycles $C_{ij}$, and each of these must have non-zero linking number with at least two regions.  Thus, the $C_{ij}$ have non-zero linking number with at least $kn$ regions in total.

Some region has non-zero linking number with at least $n$ of the $C_{ij}$ when $\frac{kn}{k+2} > n-1$.  Thus $\overline{G}$ must contain an $n+1$ link when $k>2(n-1)$.
\end{proof}

\section{Counting links and knots in directed graphs}

Recall that for a graph $G$, the digraph $\overline{DG}$ is the symmetric digraph corresponding to $G$ formed by replacing each edge $v_1 v_2$ of $G$ with the directed edges $v_1v_2$ and $v_2v_1$.
In \cite{FHR}, Question 5.2 asks if $G$ is intinsically linked, what is the minimum number of distinct links in any embedding of $\overline{DG}$?  We provide some general bounds and a precise answer for the case when $G$ has an embedding with a single non-trivial link.

We will follow notation from \cite{fmellor} and \cite{mellor2} and let $mnl(G)$ denote the minimal number of distint, non-split links (of any number of components) over all embeddings of $G$, and $\overline{mnl}(\overline{G})$ the minimal number of distinct, consistently oriented nonsplit links over all embeddings of a directed graph $\overline{G}$. Let $mnl_n(G)$ denote the minimal number of distinct non-split $n$ component links in any embedding of a graph $G$, and $\overline{mnl_n}(\overline{G})$ denote the minimal number of distinct consistently oriented non-split $n$ component links in any embedding of a directed graph $\overline{G}$.  Let $mnk(G)$ denote the minimum number of nontrivial knots over all embeddings of $G$, and $\overline{mnk}(\overline{G})$ be the minimum number of consistently oriented nontrivial knots in any embedding of a directed graph $\overline{G}$.  Let $\Gamma(G)$ denote the set of all cycles in $G$.

\begin{prop}
For a graph $G$, $\overline{mnk}(\overline{DG}) \leq 2*mnk(G)$.
\end{prop}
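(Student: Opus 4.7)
The plan is to start with an embedding $f$ of $G$ that realizes $mnk(G)$ nontrivial knotted cycles, and to thicken it to an embedding $\tilde f$ of $\overline{DG}$ by replacing each edge $v_1v_2$ with two parallel directed arcs $\overrightarrow{v_1v_2}$ and $\overrightarrow{v_2v_1}$ that cobound a small embedded disk disjoint from the rest of the graph. I will argue that the resulting embedding of $\overline{DG}$ contains exactly $2\cdot mnk(G)$ consistently oriented nontrivial knotted cycles, which immediately yields the inequality.

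The combinatorial heart of the argument is a classification of the consistently oriented simple cycles of $\overline{DG}$. Because a simple cycle visits each vertex at most once, it cannot use both of the parallel edges $\overrightarrow{v_1v_2}$ and $\overrightarrow{v_2v_1}$ unless it is the $2$-cycle on $\{v_1,v_2\}$ itself. Hence the consistently oriented cycles of $\overline{DG}$ split into two families: the digons $\overrightarrow{v_1v_2}\,\overrightarrow{v_2v_1}$, one for each edge of $G$; and cycles of length at least three, each obtained from an undirected cycle $C$ of $G$ by choosing one of its two cyclic orientations. In particular, the non-digon consistently oriented cycles are in two-to-one correspondence with the undirected cycles of $G$.

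To count the knotted ones in $\tilde f$, note that every digon bounds its bigon disk and is therefore unknotted, while for every undirected cycle $C$ of $G$ each of its two oriented versions in $\overline{DG}$ is set-wise equal, as a subset of $S^3$, to $\tilde f(C)=f(C)$. Thus the two oriented versions are knotted in $\tilde f$ exactly when $f(C)$ is knotted in $f$. Summing over all cycles of $G$ yields $2\cdot mnk(G)$ consistently oriented nontrivial knotted cycles in $\tilde f$, and since $\overline{mnk}(\overline{DG})$ is defined as the minimum over all embeddings, $\overline{mnk}(\overline{DG})\leq 2\cdot mnk(G)$.

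The argument is essentially bookkeeping; the only step requiring any care is the cycle classification together with the observation that the parallel thickening keeps the bigons trivial and leaves the knot type of each underlying cycle unchanged. I do not anticipate a serious obstacle.
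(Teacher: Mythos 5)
Your proof is correct and follows essentially the same route as the paper: extend an embedding of $G$ realizing $mnk(G)$ by doubling each edge so that the resulting bigons bound disks, observe that every consistently oriented cycle of $\overline{DG}$ is either a disk-bounding digon or one of the two orientations of a cycle of $G$, and conclude that this embedding has exactly $2\cdot mnk(G)$ consistently oriented nontrivial knots. The only nitpick is that the two oriented copies of a cycle $C$ are parallel push-offs isotopic to $f(C)$ rather than literally equal to it as point sets, but this does not affect the argument.
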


\begin{proof}
Let $f$ be an embedding of $G$ that realizes $mnk(G)$. We may extend $f$ to an embedding of $\overline{DG}$ by thickening the edges of $f(G)$ and embedding $\overline{DG}$ so that the bigons formed by the edges corresponding to $e$ bound a disk within that thickened edge.  Then for a knot $K$ in $f(G)$ there are two isotopic, consistently oriented cycles, $K^+$ and $K^-$, in $f(\overline{DG})$.  Any other consistently oriented cycle $C$ in $f(\overline{DG})$ must be a trivial knot, as if $C \in \Gamma(f(G))$ when forgetting edge orientations, then $C$ is trivial.  If $C \notin \Gamma(f(G))$ when forgetting edge orientations, then $C$ is a bigon corresponding to an edge $e$, and hence bounds a disk by the construction of $f(\overline{DG})$.   Thus $f(\overline{DG})$ contains exactly $2*mnk(G)$ nontrivial knots, and we have the bound.
\end{proof}

\begin{lemma}
Let $f(G)$ be an embedding of a spatial graph $G$ that contains $k_n$ non-split $n$ component links.  Then $\overline{mnl_n}(\overline{DG}) \leq k_n*2^n$ and $\overline{mnl}(\overline{DG}) \leq \sum_n k_n*2^n$.
\label{directed_link_count}
\end{lemma}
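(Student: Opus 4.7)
The plan is to mimic the extension construction used in the preceding proposition for knots, and then to bound the number of consistently oriented non-split links by grouping them according to their underlying (undirected) link in $f(G)$.

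First I would extend $f$ to an embedding $\tilde f$ of $\overline{DG}$ by taking thin disjoint tubular neighborhoods of the edges of $f(G)$ and, inside each such tube for an edge $e = v_1v_2$, embedding the directed edges $v_1v_2$ and $v_2v_1$ of $\overline{DG}$ as two parallel arcs that cobound an embedded disk contained in the tube. By choosing the tubes sufficiently thin and disjoint away from the vertices, every bigon (the $2$-cycle $v_1 \to v_2 \to v_1$) in $\tilde f(\overline{DG})$ bounds an embedded disk that is disjoint from the rest of the graph, i.e.\ it lies inside a small ball that meets no other edge of $\tilde f(\overline{DG})$.

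Next I would set up a correspondence between consistently oriented links in $\tilde f(\overline{DG})$ and links in $f(G)$. Observe that any consistently oriented cycle $C$ in $\overline{DG}$ visits each vertex at most once, so projecting to $G$ gives either a simple cycle (when $|C| \geq 3$) or a bigon (when $|C| = 2$). Given any non-split $n$-component link $L = (L_1, \ldots, L_n)$ in $f(G)$, each cycle $L_i$ lifts to $\overline{DG}$ in exactly two consistently oriented ways, so we obtain $2^n$ consistently oriented $n$-component links in $\tilde f(\overline{DG})$, each isotopic as an unoriented link to $L$ and hence non-split. Summing over the $k_n$ non-split $n$-links in $f(G)$ yields at most $k_n \cdot 2^n$ non-split consistently oriented $n$-links of this form, and at most $\sum_n k_n \cdot 2^n$ in total.

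The main obstacle is ruling out non-split consistently oriented links in $\tilde f(\overline{DG})$ that do \emph{not} arise from this construction. The only way such a link can fail to descend to a link of simple cycles in $f(G)$ is if some component is a bigon. But by the construction above, a bigon component bounds a disk that lies in a ball disjoint from the rest of $\tilde f(\overline{DG})$, so it is split from every other component and thus cannot appear in a non-split link. Therefore every non-split consistently oriented $n$-link in $\tilde f(\overline{DG})$ is one of the $2^n$ lifts of an underlying non-split $n$-link in $f(G)$, which gives the two claimed bounds.
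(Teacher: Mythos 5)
Your proposal is correct and follows essentially the same route as the paper: extend the embedding so every bigon bounds a disk, count the $2^n$ consistent orientations of each non-split $n$-link, and rule out extra links by noting that a component which is not a bigon descends to a cycle of $f(G)$ (so a non-split link descends to one of the $k_n$ links) while a bigon component bounds a disk and splits off. No gaps worth noting.
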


\begin{proof}
We may extend $f(G)$ to an embedding of $\overline{DG}$ by thickening the edges of $f(G)$ and then embedding $\overline{DG}$ so that the bigon formed by the directed  edges corresponding to an edge $e$ bound a disk within the thickened edge $f(e)$. Call this embedding $f(\overline{DG})$.

We now count the distinct non-split $n$ component links in $f(\overline{DG})$.  Given a nonsplit link $L_1 \ldots L_n$ in $f(G)$, we choose an orientation for each component.  Because all of the bigons formed by birected edges bound disks in $f(\overline{DG})$, the cycles $L_i^+$ and $L_i^-$ from $f(\overline{DG})$ may be thought of an element  of $f(\Gamma(G))$, specifically, they are isotopic to $L_i$. Thus for any choice of orientation, $L_1^{\pm} \ldots L_n^{\pm}$ is a distinct, non-split, consistently oriented link in $f(\overline{DG})$. Thus $f(\overline{DG})$ contains at least $k_n*2^n$ such links.

Suppose that $L_1 \ldots L_n$ is a consistently oriented $n$ component link not constructed in the manner of the preceeding paragraph.  Suppose every $L_i$ can be mapped to an element $L'_i$ of $f(\Gamma(G))$ by forgetting orientation.   Then the link $L'_1 \ldots L'_n$ is split, so $L_1 \ldots L_n$ is split as well.   Thus, some $L_i$ does not map to a cycle in $f(\Gamma(G))$. Then $L_i$ is a bigon in $\Gamma(\overline{DG})$ formed from doubling an edge of $G$, and hence $L_i$ bounds a disk in $f(\overline{DG})$.  This implies that $L_1 \ldots L_n$ is split as well.

Thus, the embedding $f(\overline{DG})$ has exactly $k_n*2^n$ distinct consistently oriented non-split $n$ component links, giving the bounds.
\end{proof}

\begin{cor}
For a graph $G$, $\overline{mnl_n}(\overline{DG}) \leq mnl_n(G)*2^n$.
\end{cor}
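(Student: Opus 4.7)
The plan is to derive the corollary as an immediate consequence of Lemma \ref{directed_link_count}. Since $\overline{mnl_n}(\overline{DG})$ is defined as a minimum over all embeddings of $\overline{DG}$, it suffices to exhibit a single embedding of $\overline{DG}$ that contains at most $mnl_n(G) \cdot 2^n$ distinct consistently oriented non-split $n$-component links.

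First I would choose an embedding $f$ of $G$ that realizes the minimum $mnl_n(G)$; by definition this embedding contains exactly $k_n = mnl_n(G)$ non-split $n$-component links (and possibly non-split links of other sizes, but these are irrelevant for bounding $\overline{mnl_n}$). Next I would apply the thickening construction from the proof of Lemma \ref{directed_link_count} verbatim: thicken each edge of $f(G)$ and embed the two oppositely directed edges of $\overline{DG}$ corresponding to each edge of $G$ so that the resulting bigon bounds a disk inside the thickened edge. Call the resulting embedding $\widetilde{f}$ of $\overline{DG}$.

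By Lemma \ref{directed_link_count} applied to the embedding $f$ with $k_n = mnl_n(G)$, the embedding $\widetilde{f}$ contains at most $mnl_n(G) \cdot 2^n$ distinct consistently oriented non-split $n$-component links. Since $\overline{mnl_n}(\overline{DG})$ is the minimum of this count over all embeddings, we conclude $\overline{mnl_n}(\overline{DG}) \leq mnl_n(G) \cdot 2^n$. There is no real obstacle here; the only thing to verify is that the lemma's construction applies to any embedding of $G$, which it does, so it applies in particular to an $mnl_n$-minimizing embedding.
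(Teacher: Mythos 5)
Your proposal is correct and is essentially identical to the paper's proof, which simply chooses an embedding $f(G)$ realizing $mnl_n(G)$ and applies Lemma \ref{directed_link_count}. The extra detail you supply (spelling out the thickening construction and the minimization over embeddings) is already contained in the lemma's proof, so nothing further is needed.
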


\begin{proof}
Choose $f(G)$ to be an embedding that realizes $mnl_n(G)$, and apply Lemma \ref{directed_link_count}.
\end{proof}

\begin{cor}
If a graph $G$ has an embedding that simultaneously realizes $mnl_n(G)$ for all $n$, then $\overline{mnl}(\overline{DG}) \leq \sum_n mnl_n(G)*2^n$.
\label{count_bound_cor}
\end{cor}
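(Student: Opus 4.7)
The plan is to combine the hypothesis with Lemma \ref{directed_link_count} in the most direct way possible. The hypothesis gives us a single embedding $f$ of $G$ that is simultaneously extremal for every link size $n$, and Lemma \ref{directed_link_count} bounds $\overline{mnl}(\overline{DG})$ in terms of the numbers of links of each component count appearing in any specific embedding of $G$. So the main work has already been done; this corollary is really just the observation that those two facts fit together.

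First I would fix an embedding $f$ of $G$ that realizes $mnl_n(G)$ simultaneously for every $n$, as guaranteed by the hypothesis. For this $f$, the number $k_n$ of non-split $n$-component links in $f(G)$ equals $mnl_n(G)$ for each $n$. Next I would extend $f$ to an embedding of $\overline{DG}$ exactly as in the proof of Lemma \ref{directed_link_count}, by thickening each edge and placing the two oppositely-directed edges so that their bigon bounds a disk inside the thickened edge. Applying Lemma \ref{directed_link_count} to this extension then gives
\[
\overline{mnl}(\overline{DG}) \;\leq\; \sum_n k_n \cdot 2^n \;=\; \sum_n mnl_n(G)\cdot 2^n,
\]
which is the desired bound.

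There is essentially no obstacle, since all of the geometric work (the bigon-bounding construction and the argument that every consistently oriented non-split link in $f(\overline{DG})$ arises from orienting a link in $f(G)$) has been packaged into Lemma \ref{directed_link_count}. The only thing to be careful about is that the hypothesis is genuinely needed: without a single embedding that is simultaneously extremal for every $n$, one cannot pass $mnl_n(G)$ through a single sum, so the corollary should be stated (as it is) with that assumption explicit. The short proof will simply invoke the hypothesis, set $k_n = mnl_n(G)$, and quote Lemma \ref{directed_link_count}.
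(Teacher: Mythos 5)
Your proof is correct and is essentially identical to the paper's: both fix an embedding $f(G)$ that simultaneously realizes $mnl_n(G)$ for all $n$, so that $k_n = mnl_n(G)$, and then apply Lemma \ref{directed_link_count}. No issues.
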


\begin{proof}
Choose $f(G)$ to be an embedding that simultaneously realizes $mnl_n(G)$, and apply Lemma \ref{directed_link_count}.
\end{proof}

While we do not have an example, it seems that the condition on the simultaneous realization of $mnl_n(G)$ should not be vacuous.  In \cite{foisy2}, the second author found a graph $G$ with $mnk(G)=0$ and $mnl_3(G)=0$, but such that every embedding of $G$ contains either a 3-link or a nontrivial knot.  Hence, the lower bounds of $mnk(G)$ and $mnl_3(G)$ cannot be simultaneously realized.  Similar behavior is known for several of the graphs in Heawood family \cite{nikk2}.  Thus, it seems possible that there is a graph $G'$ such that $mnl_n(G')$ and $mnl_m(G')$ cannot be simultaneously realized in any embedding of $G'$.

\begin{prop}
Let $G$ be a graph with $mnl(G) = 1$.  Then $\overline{mnl}(\overline{DG}) = 4$.
\label{count_lemma}
\end{prop}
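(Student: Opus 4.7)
The plan is to prove both directions of the equality. The upper bound $\overline{mnl}(\overline{DG}) \leq 4$ follows by applying Corollary \ref{count_bound_cor} to an embedding $f_0$ of $G$ achieving $mnl(G)=1$, under the convention that this unique nonsplit link is a 2-component link (a 3-component unique link as in a Borromean configuration would instead yield the weaker bound $2^3=8$). Under this convention, $f_0$ simultaneously realizes $mnl_2(G)=1$ and $mnl_n(G)=0$ for every $n\neq 2$, so the corollary gives $\overline{mnl}(\overline{DG})\leq 2^2=4$.

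For the matching lower bound, let $f$ be any embedding of $\overline{DG}$; the goal is to produce at least four distinct consistently oriented nonsplit links. First, pick any arc-choice $\sigma:E(G)\to\{+,-\}$ to form the subembedding $f_\sigma(G)\subset f(\overline{DG})$, which is an embedding of $G$ and therefore contains a nonsplit 2-link $(C_1,C_2)$. The cycles $C_1,C_2\subset G$ admit four distinct consistently oriented realizations in $\overline{DG}$, namely $(C_1^a,C_2^b)$ for $(a,b)\in\{+,-\}^2$, and it remains to argue that each is nonsplit in $f(\overline{DG})$.

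The key tool is the homological identity $[C^+]+[C^-]=\sum_{e\in E(C)}[B_e]$ in $H_1(f(\overline{DG}))$, where $B_e$ denotes the oriented bigon of the edge $e$. This yields the linking number relation
\[
lk(C_1^+,C_2^b)+lk(C_1^-,C_2^b)=\sum_{e\in E(C_1)}lk(B_e,C_2^b),
\]
together with its symmetric counterpart obtained by interchanging the roles of $C_1$ and $C_2$. When every bigon of $E(C_1)\cup E(C_2)$ bounds an embedded disk disjoint from the other cycle, both right-hand sides vanish; the four numbers $lk(C_1^a,C_2^b)$ then share a common absolute value, and since at least one is nonzero (from the nonsplit 2-link in $f_\sigma(G)$), all four are nonzero and the four oriented realizations are all nonsplit.

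The main obstacle is the general case in which some bigons fail to bound disjoint disks, so that bigon contributions on the right-hand side can cause individual $lk(C_1^a,C_2^b)$ to vanish. The plan is to argue that each nonzero summand $lk(B_e,C_2^b)\neq 0$ furnishes a new consistently oriented nonsplit 2-link $(B_e,C_2^b)$, and these bigon--cycle links compensate for any orientations $(a,b)$ where $(C_1^a,C_2^b)$ becomes split, keeping the total count of distinct consistently oriented nonsplit links in $f(\overline{DG})$ at least four. Controlling this compensation uniformly across all four orientation choices is the critical technical step needed to conclude the lower bound in full generality.
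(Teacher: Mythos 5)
Your upper bound is essentially the paper's, but the step you wave off as a ``convention'' is actually the substantive point: you must \emph{prove} that the unique nonsplit link in the minimizing embedding has two components. This follows because $mnl(G)=1$ makes $G$ intrinsically linked, so $G$ has a Petersen-family minor and hence every embedding contains a two-component link with \emph{nonzero linking number}; the unique link is therefore that $2$-link, $mnl_2(G)=1$, $mnl_n(G)=0$ for $n>2$, and the minimizing embedding realizes all the $mnl_n$ simultaneously, which is exactly the hypothesis Corollary \ref{count_bound_cor} needs. Without this, a hypothetical Borromean-type unique link would leave you with the bound $8$, not $4$, and the proposition would not be established.

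The lower bound has two genuine gaps. First, from the sub-embedding $f_\sigma(G)$ you extract only a ``nonsplit $2$-link,'' but your homological identity manipulates linking numbers, and a nonsplit link (e.g.\ a Whitehead link) can have linking number zero; you again need the Petersen-family fact to guarantee $lk(C_1,C_2)\neq 0$. Second, and more seriously, your argument is complete only in the special case where every bigon of $E(C_1)\cup E(C_2)$ bounds a disk disjoint from the other cycle; in the general case you state a plan (bigon--cycle links ``compensate'' for vanishing $lk(C_1^a,C_2^b)$) and explicitly defer the ``critical technical step.'' That step is where the content lies, and it is not routine: one must show the compensating links are distinct and that the total count never drops below four across all sign choices. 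The paper avoids this case split entirely: the $k$ bigons of $L_1$ together with $L_1^+$ and $L_1^-$ bound the complementary regions of a $2$-sphere, so $L_2$ must have nonzero linking number with at least \emph{two} of these consistently oriented cycles $\alpha_1,\alpha_2$ (this single observation subsumes your identity $lk(C_1^+,\cdot)+lk(C_1^-,\cdot)=\sum_e lk(B_e,\cdot)$ and handles the bigon contributions automatically); repeating the argument on the $L_2$ side for each $\alpha_i$ yields $\beta_{i1},\beta_{i2}$ and hence four distinct consistently oriented nonsplit links, with bigons permitted as components. As written, your proposal does not establish the lower bound.
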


\begin{proof}
As $G$ is intrinsically linked, it must contain one of the Petersen family graphs as a minor \cite{rst}.  Thus, every embedding of $G$ must contain a non-split 2 component link with non-zero linking number.  So, $mnl_2(G) = 1$, and as $mnl(G) = 1$, $mnl_n(G) = 0$ for all $n > 2$. Note that an embedding $f(G)$ that realizes $mnl(G)$ also realizes $mnl_n(G)$ simultaneously for all $n$.  Thus by Corollary \ref{count_bound_cor} $\overline{mnl}(\overline{DG}) \leq 4$.

Let $f(\overline{DG})$ be an arbitrary embedding of $\overline{DG}$.  We may obtain an embedding of $G$ by deleting one of each pair of directed edges and ignoring orientations.  Thus, in $f(\overline{DG})$ we may find two cycles $L_1$ and $L_2$ (possibly with inconsistent orientation) that have non-zero linking number.

Suppose $L_1$ is $k$ edges in length.  The edges of $L_1$ and the edges with opposite orientation can be thought of as subdividing the sphere into $k+2$ regions, where the boundary of each region is a consistently oriented cycle ($k$ bigons and 2 $k$-gons).  See Figure \ref{link_count_example_fig}.  As $L_2$ has non-zero linking number with $L_1$, it must have non-zero linking number with at least two of these consistently oriented cycles, call them $\alpha_1$ and $\alpha_2$.

\begin{figure}
\includegraphics[scale=.5]{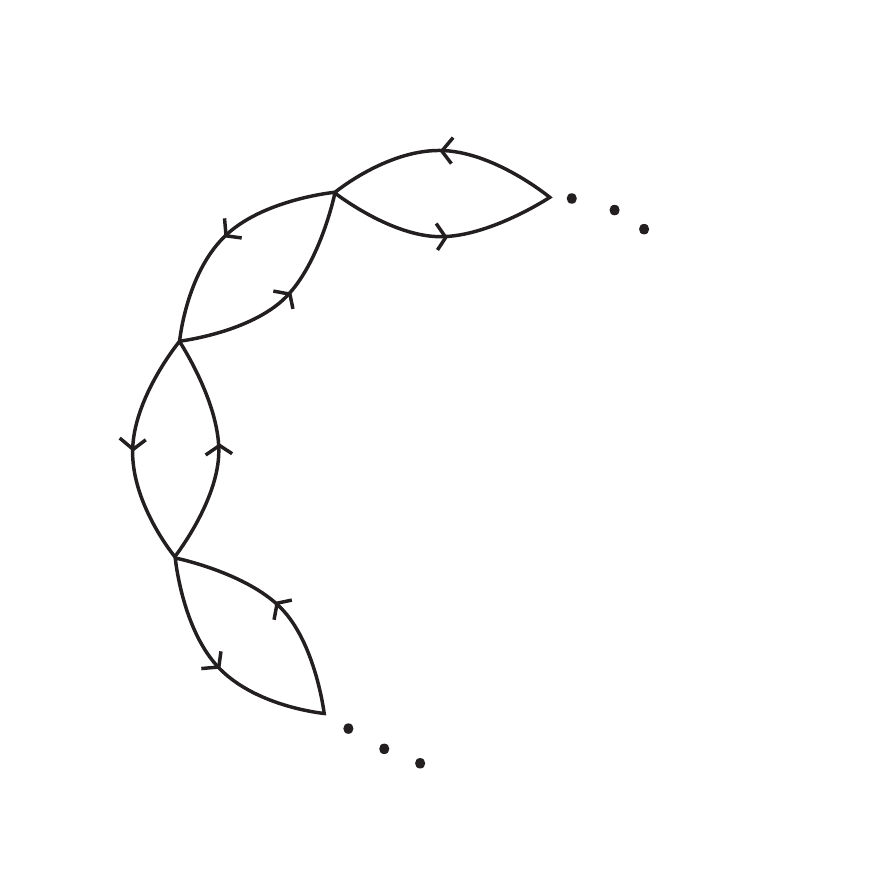}
\caption{The bigons formed from $L_1$.}
\label{link_count_example_fig}
\end{figure}

Similarly, we now consider the $k'$ edges of $L_2$ and those of opposite orientation.   They divide the sphere into $k'+2$ regions whose boundaries are consistently oriented cycles of $\overline{DG}$, so each $\alpha_i$ must have nonzero linking number with two consistent cycles $\beta_{i1}$ and $\beta_{i2}$.   This gives four distinct consistently oriented nonsplit links in $f(\overline{DG})$:  $(\alpha_1, \beta_{11}), (\alpha_1, \beta_{12}), (\alpha_2, \beta_{21})$ and $(\alpha_2, \beta_{22})$.

Thus, every embedding of $\overline{DG}$ must contain at least $4$ distinct, consistently oriented non-split links.
\end{proof}

Foisy-Howards-Rich show that if G has a linkless embedding, then $\overline{DG}$ does as well in Theroem 3.6 of \cite{FHR}.  Combining that with Proposition \ref{count_lemma}, we have the following corollary.

\begin{cor}
If $\overline{DG}$ is intrinsically linked, then $\overline{mnl}(\overline{DG}) \geq 4$.
\end{cor}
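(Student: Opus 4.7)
The plan is short because the claim follows by combining two results that are already in hand; the observation needed is that the lower-bound half of the proof of Proposition \ref{count_lemma} never used the hypothesis $mnl(G)=1$, only that $G$ is intrinsically linked as an undirected graph.

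First I would invoke the contrapositive of Theorem 3.6 of \cite{FHR}. If the underlying undirected graph $G$ had a linkless embedding, one could extend it to $\overline{DG}$ by thickening the edges and letting each bigon of antiparallel edges bound a disk, contradicting the hypothesis that $\overline{DG}$ is intrinsically linked. Hence $G$ is intrinsically linked, and by the Robertson--Seymour--Thomas classification \cite{rst} it contains a Petersen family minor, so every embedding of $G$ contains a non-split 2-component link of non-zero linking number.

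Next I would repeat the ``bigon-doubling'' argument from the lower bound in the proof of Proposition \ref{count_lemma}. Given an arbitrary embedding $f(\overline{DG})$, delete one edge from each symmetric pair to obtain an embedding of $G$; in it we find two cycles $L_1,L_2$ (not necessarily consistently oriented) with $lk(L_1,L_2)\neq 0$. The $k$ directed edges of $L_1$ together with their reverses partition $S^2$ into $k+2$ regions whose boundaries are consistently oriented cycles of $\overline{DG}$, and $L_2$ must have non-zero linking number with at least two of them, say $\alpha_1,\alpha_2$. Applying the same construction to $L_2$, each $\alpha_i$ in turn has non-zero linking number with at least two consistently oriented cycles $\beta_{i1},\beta_{i2}$, yielding the four distinct consistently oriented non-split links $(\alpha_i,\beta_{ij})$.

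There is essentially no obstacle: the only thing to check carefully is that the bigon subdivision argument really does go through under the weaker assumption ``$G$ is intrinsically linked'' rather than ``$mnl(G)=1$,'' and this is transparent from the proof of Proposition \ref{count_lemma}. The corollary therefore follows at once from Theorem 3.6 of \cite{FHR} combined with the lower-bound portion of Proposition \ref{count_lemma}.
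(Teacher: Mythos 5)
Your proof is correct and follows exactly the paper's route: the paper also deduces that $G$ is intrinsically linked from the contrapositive of Theorem 3.6 of \cite{FHR} and then reuses the lower-bound (bigon-doubling) half of the proof of Proposition \ref{count_lemma}, which indeed only needs that every embedding of $G$ contains a two-component link of non-zero linking number. Your explicit remark that the hypothesis $mnl(G)=1$ is never used in that half is precisely the observation the paper leaves implicit.
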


Note that this bound need not hold for arbitrary directed graphs.  Specifically, if $\overline{G}$ is the intrinsically linked digraph of Figure \ref{counterexample}, the embedding $f'$ of $\overline{G}$ used in the proof of Theorem \ref{vertex_exp_thm} has a single non-split consistently oriented link, hence $\overline{mnl}(\overline{G}) =1$.

\section{A remark on linkless embeddings}
\label{density_section}

In Section \ref{consistent_section}, we have already highlighted one important difference between intrinsic linking in undirected graphs and intrinsic linking in directed graphs, namely that the standard operation of taking minors preserves the property of having a linkless embedding for undirected graphs, but does not do so in the directed graph case. Here we highlight another important difference between the directed and undirected cases.  Specifically, we consider the extremal problem of the maximum number of edges for a graph $G$ or a digraph $\overline{G}$ on $n$ vertices that admits a linkless embedding.  (Alternately, for $n>6$, the minimal number of edges that forces $G$ or $\overline{G}$ to be intrinsically linked).  For undirected graphs, the following is a corollary of standard results on graph minors.

\begin{cor} The maximum number of edges in a graph $G$ on $n$ vertices that admits a linkless embedding is $O(n)$.
\end{cor}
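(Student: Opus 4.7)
The plan is to derive this as an immediate contrapositive consequence of two well-known results that are already cited in the paper's introduction, so the proof will be quite short and mostly a matter of assembling citations.

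First I would recall Mader's theorem \cite{mader} (with subsequent refinements by Thomason \cite{thom}): there is an absolute constant $c$ such that every graph on $n$ vertices with more than $cn$ edges contains $K_6$ as a minor. Next I would invoke the Conway--Gordon--Sachs theorem \cite{sachs}, \cite{cg} that $K_6$ is intrinsically linked, combined with the Nešetřil--Thomas result \cite{nt} that the property of having a linkless embedding is closed under taking minors. Together these yield: any graph containing a $K_6$ minor is itself intrinsically linked.

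The conclusion then follows by contraposition. If $G$ admits a linkless embedding, then $G$ cannot contain a $K_6$ minor, and hence $G$ has at most $cn$ edges. Thus the maximum number of edges in a linklessly embeddable graph on $n$ vertices is $O(n)$.

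Since every ingredient is a cited theorem, there is no real obstacle — the only choice is how explicitly to state the contrapositive and whether to record the best known constant (for instance, the bound $4n-10$ due to Mader for edges forcing a $K_6$ minor). I would keep it brief, essentially a two-sentence proof, matching the tone of a corollary.
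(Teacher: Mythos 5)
Your proposal is correct and follows exactly the paper's argument: cite the Mader/Thomason linear bound forcing a $K_6$ minor, then conclude intrinsic linking (the paper leaves the minor-closedness step implicit, which you spell out via \cite{nt} and \cite{cg}). No differences of substance.
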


\begin{proof}
There is a constant $c(6)$ such that if $G$ has more than $c(6)n$ edges, then $G$ has $K_6$ as a minor \cite{mader}, \cite{thom}.   Hence if $G$ has more than $c(6)n$ edges, $G$ is intrinsically linked.
\end{proof}

In contrast, for directed graphs much denser examples may not be intrinsically linked as digraphs.   Recall that a transitive tournament is an orientation of the complete graph $K_n$ so that the edge $ij$ is directed from $i$ to $j$ if $i<j$.  Theorem 3.2 of \cite{FHR} shows that a transitive tournament cannot be intrinsically linked as a digraph.  We provide an even denser example below.  Let $E(\overline{G})$ denote the edge set of $\overline{G}$ and $|E(\overline{G} )|$ the number of edges.

\begin{theorem} The maximum number of edges in a digraph on $n$ vertices that admits an embedding with no pair of disjoint, consistently oriented cycles that form a nonsplit link is $O(n^2)$.  Further, let $c$ be the constant such that $|E(\overline{G} )| > c n^2$ implies that $\overline{G}$ is intrinsically linked as a digraph. Then $ \frac{1}{2} < c \leq \frac{9}{10}.$
\label{extemal_lma}
\end{theorem}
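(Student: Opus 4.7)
The plan is to handle the two bounds on $c$ separately; the $O(n^2)$ assertion will be immediate once the lower-bound construction exhibits a non-intrinsically-linked family with $\Omega(n^2)$ edges (the trivial upper bound $|E(\overline{G})| \leq n(n-1)$ handles the $O$ side). For the lower bound $c > \tfrac{1}{2}$, I would give an explicit construction on the vertex set $\{1, 2, \ldots, n\}$: take the transitive tournament ($i \to j$ for each $i < j$) and adjoin the $n-1$ back-edges $j \to 1$ for $j = 2, \ldots, n$. This digraph has $\binom{n}{2} + (n-1) = \tfrac{1}{2}(n^2 + n - 2)$ edges, which strictly exceeds $\tfrac{1}{2} n^2$ for every $n \geq 3$. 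The key observation is that every consistently oriented cycle must traverse at least one back-edge (since the transitive tournament portion is acyclic), and every back-edge terminates at vertex $1$. Hence every consistently oriented cycle passes through vertex $1$, so no two such cycles can be vertex-disjoint, and no non-split consistently oriented link can occur in any embedding. Thus this digraph is not intrinsically linked, forcing $c \geq (n^2+n-2)/(2n^2) > \tfrac{1}{2}$.

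For the upper bound $c \leq \tfrac{9}{10}$, I would argue via Tur\'an's theorem applied to the ``symmetric part'' of $\overline{G}$. Let $s$ denote the number of unordered vertex pairs $\{u,v\}$ with both $uv$ and $vu$ in $E(\overline{G})$, and let $a$ denote the number of unordered pairs with exactly one of the two directed edges present. Then $|E(\overline{G})| = 2s + a$ and $s + a \leq \binom{n}{2} < \tfrac{n^2}{2}$. Under the assumption $|E(\overline{G})| > \tfrac{9}{10}n^2$, these combine to yield
\[
s \;>\; \tfrac{9}{10}n^2 - \tfrac{n^2}{2} \;=\; \tfrac{2}{5}n^2.
\]
Let $G_s$ be the undirected graph whose edges are precisely the symmetric pairs of $\overline{G}$. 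Then $|E(G_s)| > \tfrac{2}{5}n^2 = \bigl(1 - \tfrac{1}{5}\bigr)\tfrac{n^2}{2}$, so by Tur\'an's theorem (with $r=5$), $G_s$ contains $K_6$ as a subgraph. Lifting this $K_6$ back to $\overline{G}$ produces $\overline{DK_6}$ as a subgraph of $\overline{G}$, and since $\overline{DK_6}$ is intrinsically linked by Theorem 3.5 of \cite{FHR}, so is $\overline{G}$.

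The main obstacle is mostly notational: one has to set up the counting so that the strict inequality $s > \tfrac{2}{5}n^2$ propagates cleanly and Tur\'an's theorem applies. The slack comes from $\binom{n}{2} < \tfrac{n^2}{2}$, which is sufficient. On the construction side, the linkless-embedding verification is purely combinatorial --- funneling all back-edges into a common endpoint forces every consistent cycle through that vertex, so the absence of two disjoint consistent cycles immediately rules out any consistent link in every embedding, with no delicate topological input needed.
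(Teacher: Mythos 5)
Your proposal is correct and follows essentially the same route as the paper: the lower-bound construction (transitive tournament plus all back-edges into a single vertex, giving $\tfrac{(n+2)(n-1)}{2} > \tfrac{n^2}{2}$ edges with every consistent cycle forced through that vertex) is the same digraph the paper builds, and the upper bound is the identical Tur\'an argument showing more than $\tfrac{2}{5}n^2$ symmetric pairs force a $\overline{DK_6}$ subdigraph. Only the bookkeeping differs superficially.
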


\begin{proof}
Let $\overline{G'}$ be a transitive tournament on $n-1$ vertices.  Let $\overline{G}$ be the graph formed by adding a vertex $v$ and symmetric directed edges to and from $v$ from and to every vertex of $\overline{G'}$.  As $\overline{G'}$ contains no consistently oriented cycles, any consistently oriented cycle in $\overline{G}$ must pass through vertex $v$.  Thus $\overline{G}$ does not contain a pair of disjoint, consistently oriented cycles, so cannot be intrinsically linked as a digraph.  By construction, $\overline{G}$ has $\frac{(n+2)(n-1)}{2}$ edges.  As $\overline{DK_n}$ has $2{ n \choose 2}$ edges, the maximum number of edges in a digraph is $O(n^2)$.  Thus, the maximum number of edges in a digraph that admits a linkless embedding is $O(n^2)$.  As $\frac{(n+2)(n-1)}{2} > \frac{n^2}{2}$, the example $\overline{G}$ above shows that $c > \frac{1}{2}$.

We now address the upper bound for $c$.  Let $\overline{G}$ be a digraph with $|E(\overline{G})| > \frac{9}{10} n^2$.  Let $G'$ be an undirected graph on $n$ vertices such that $w_iw_j \in E(G')$ if $v_i$ and $v_j$ of $\overline{G}$ are connected with symmetric directed edges. As $\overline{G}$ can be formed from $\overline{DK_n}$ by deleting fewer than $\frac{1}{10} n^2$ edges, at most $\frac{1}{10} n^2$ edges of $\overline{G}$ are not part of a symmetric pair.  Thus more than $\frac{8}{10} n^2$ edges of $\overline{G}$ occur in symmetric pairs, so $G'$ has more than $\frac{4}{5} \frac{n^2}{2}$ edges.  By Tur\'{a}n's Theorem, $G'$ contains $K_6$ as a subgraph, and by the construction of $G'$, this implies $\overline{G}$ contains $\overline{DK_6}$ as a subdigraph. Therefore, $\overline{G}$ must be intrinsically linked as a digraph.
\end{proof}

A similar result holds for intrinsic knotting as $c(7)n$ edges on $n$ vertices guarantee a $K_7$ minor (\cite{mader}, \cite{thom}) and hence intrinsic knotting, but a transitive tournament with ${ n \choose 2}$ edges has no consistently oriented cycle, and hence cannot be intrinsically knotted as a digraph.

Examples of linkless or knotless digraphs with more edges than the examples discussed above may be possible.   We leave this as an open question for future research.

\medskip

\textsc{666 5th Avenue, 9th Floor,  New York, NY 10103}

\medskip

\textsc{Department of Mathematics, SUNY Potsdam,  Potsdam, NY 13676}


\begin{thebibliography}{99}

\bibitem{mellor2} L. Abrams, B. Mellor, L. Trott, \emph{Counting links and knots in complete graphs}, Tokyo J. Math, \textbf{36} No. 2 (2013) 429-458

\bibitem{cg} J. H. Conway and C. McA. Gordon, \emph{Knots and links in spatial graphs}, J. Graph Th. \textbf{7} (1983) 446-453

\bibitem{flapan2} E. Flapan, J. Foisy, R. Naimi, and J. Pommersheim,
\emph{Intrinsically n-linked graphs}, J. Knot Theory Ramif.
\textbf{10} No. 8 (2001) 1143-1154

\bibitem{survey} E. Flapan, T. Mattman, B. Mellor, R. Naimi, R. Nikkuni \emph{Recent developments in spatial graph theory},  preprint, arXiv:1602.08122v2

\bibitem{flappan} E. Flapan, R. Naimi, B. Mellor, \emph{Intrinsic linking and knotting are arbitrarily complex}, Fundamenta Mathematicae, \textbf{201} (2008) 131-148

\bibitem{flapan3} E. Flapan, R. Naimi, J. Pommershiem, \emph{Intrinsically triple linked complete graphs}, Topol. Appl. \textbf{115} (2001) 239-246

\bibitem{flem} T. Fleming, \emph{Intrinsically linked graphs with knotted components}, J. Knot Theory Ramif. \textbf{21} No. 7 (2012)

\bibitem{fmellor} T. Fleming and B. Mellor, \emph{Counting links in complete graphs}, Osaka J. Math. 46 (2009), 173-201

\bibitem{foisy} J. Foisy, \emph{Intrinsically knotted graphs}, J. Graph Th. \textbf{39} No. 3 (2002) 178-187

\bibitem{foisy2} J. Foisy, \emph{Graphs with a knot or 3-component link in every spatial embedding}, J. Knot Th. Ramif., \textbf{15} (2006) 1113-1118

\bibitem{FHR} J. Foisy, H. Howards, N. Rich \emph{Intrinsic linking in directed graphs}, Osaka J. Math. \textbf{52} (2015) 817-831

\bibitem{nikk2} R. Hanaki, R. Nikkuni, K. Taniyama, A. Yamazaki, \emph{On intrinsically knotted or completely 3-linked graphs} Pac. J. Math. \textbf{252} (2011) 407-425

\bibitem{jag} C. Jagger \emph{Tournaments as strong subcontractions}, Discrete Math \textbf{176} (1997) 177-184

\bibitem{mader} W. Mader, \emph{Homomorphieeigenschaften und mittlere Kantendichte von Graphen}, Mathematische Annalen \textbf{174} No. 4 (1967) 265-268

\bibitem{nt} J. Ne\v{s}et\v{r}il and R. Thomas, \emph{A note on spatial representation of graphs} Commentat. Math. Univ. Carolinae \textbf{26} No. 4 (1985) 655-659

\bibitem{rst} N. Robertson, P. Seymour, R. Thomas, \emph{Sachs' linkless embedding conjecture}, J. Comb Theory Ser. B \textbf{64} (1995) 185-277

\bibitem{sachs} H. Sachs, \emph{On spatial representations of finite graphs}, in: A. Hajnal, L. Lovasz, V.T. S\'os (Eds.), Colloq. Math. Soc. J\'anos Bolyai, Vol. 37, North-Holland, Amsterdam, (1984) 649-662

\bibitem{ty}K. Taniyama and A. Yasuhara, \emph{Realization of knots and links in a spatial graph}, Topol. Appl. 112 (2001) 87-109

\bibitem{thom} A. Thomason,  \emph{The extremal function for complete minors}, J. Comb Theory Ser. B \textbf{81} (2001) 318-338

\end{thebibliography}
\end{document}